\documentclass[10pt]{amsart}
\usepackage[T1]{fontenc}
\usepackage[utf8]{inputenc}
\usepackage[english]{babel}
\usepackage{csquotes} 
\usepackage[a4paper,margin=3cm]{geometry}
\allowdisplaybreaks
\usepackage[draft]{graphicx}
\usepackage[backend=biber,style=alphabetic,bibencoding=utf8,language=auto,autolang=other,giveninits=true,doi=false,isbn=false,url=false,maxnames=10,sorting=nty]{biblatex}
\DeclareNolabel{
  \nolabel{\regexp{[\p{P}\p{S}\p{C}\p{Z}]+}}
}
\usepackage{caption} 
\usepackage[hyperfootnotes=false]{hyperref} 
\hypersetup{
	colorlinks = true,
	linkcolor = {blue},
	urlcolor = {red},
	citecolor = {blue}
}
\usepackage[nameinlink,capitalise,sort]{cleveref}
\crefname{equation}{}{} 

\crefname{figure}{Figure}{Figures}

\usepackage{amsmath}
\usepackage{amsthm}
\usepackage{amssymb}
\usepackage{esint} 
\usepackage{mathrsfs} 
\usepackage{faktor} 
\usepackage{dsfont} 
\usepackage[dvipsnames]{xcolor}
\usepackage{array}
\usepackage{hhline}
\usepackage[normalem]{ulem}
\usepackage{comment} 
\usepackage[toc,page]{appendix} 
\usepackage{imakeidx} 
\usepackage{xparse} 
\usepackage{mathtools}
\usepackage{fancyhdr} 
\usepackage{ifthen} 
\usepackage{forloop} 
\usepackage{xstring}
\usepackage{tikz}
\usetikzlibrary{positioning, arrows.meta}
\usetikzlibrary{babel} 
\usetikzlibrary{cd} 
\usepackage{tikz-cd} 
\usepackage{emptypage} 
\usepackage{listings} 
\usepackage{mathabx} 
\usepackage{subcaption}

\theoremstyle{plain}
\newtheorem{lemma}{Lemma}[section]
\newtheorem{proposition}[lemma]{Proposition}
\newtheorem{theorem}[lemma]{Theorem}
\newtheorem{corollary}[lemma]{Corollary}

\newtheorem{remark}[lemma]{Remark}

\theoremstyle{definition}

\theoremstyle{remark}

\numberwithin{equation}{section}

\newcommand{\loc}{\mathrm{loc}}

\newcommand{\R}{\mathbb{R}}

\newcommand{\p}{\partial}
\newcommand{\pt}{\partial_t}

\newcommand{\bx}{\boldsymbol{x}}
\newcommand{\bb}{{\boldsymbol{b}}}

\newcommand{\be}{{\boldsymbol{e}}}

\newcommand{\bu}{{\boldsymbol{u}}}
\newcommand{\bv}{{\boldsymbol{v}}}
\newcommand{\bU}{{\boldsymbol{U}}}
\newcommand{\bw}{{\boldsymbol{w}}}
\newcommand{\bB}{{\boldsymbol{B}}}
\newcommand{\Bf}{{\boldsymbol{f}}}
\newcommand{\BF}{{\boldsymbol{F}}}

\newcommand{\vp}{\varphi}

\newcommand{\ve}{\varepsilon}
\usepackage{soul}
\usepackage{xcolor}

\makeatletter
\soulregister\ref7
\soulregister\eqref7
\soulregister\cref7
\soulregister\Cref7
\soulregister\autoref7

\soulregister\today7
\soulregister\cite7
\soulregister\citep7
\soulregister\citet7
\soulregister\citealp7
\soulregister\citeauthor7
\soulregister\citeyear7

\soulregister\emph7
\soulregister\textbf7
\soulregister\textit7
\soulregister\texttt7
\makeatother

\makeatletter
\newcommand{\hlg@math}[1]{%
  \begingroup
  \setlength{\fboxsep}{0.6pt}
  \mathchoice
    {\colorbox{yellow!25}{$\displaystyle #1$}}%
    {\colorbox{yellow!25}{$\textstyle #1$}}%
    {\colorbox{yellow!25}{$\scriptstyle #1$}}%
    {\colorbox{yellow!25}{$\scriptscriptstyle #1$}}%
  \endgroup
}
\makeatother

\DeclareRobustCommand{\hlg}[1]{%
  \begingroup
  \sethlcolor{yellow!25}%
  \ifmmode
    \hlg@math{#1}%
  \else
    \hl{#1}%
  \fi
  \endgroup
}

\usepackage[shortlabels]{enumitem}
\newlist{thmenum}{enumerate}{1} 
\setlist[thmenum]{label=\textup{(\roman*)},
                  ref=\thetheorem-{(\roman*)}}
\crefname{thmenumi}{Theorem}{Theorem}

\newlist{lemenum}{enumerate}{1} 
\setlist[lemenum]{label={(\roman*)},
                  ref=\thelemma-{(\roman*)}}
\crefname{lemenumi}{Lemma}{Lemmas}

\newlist{propenum}{enumerate}{1} 
\setlist[propenum]{label={(\roman*)},
                  ref=\thelemma-{(\roman*)}}
\crefname{propenumi}{Proposition}{Propositions}

\begin{document}


\title[Landau solutions for MHD]{Asymptotic stability of Landau solutions to the MHD system and energy decay}

\author[N.~De Nitti]{Nicola De Nitti}
\address[N.~De Nitti]{Università di Pisa, Dipartimento di Matematica, Largo Bruno Pontecorvo 5, 56127 Pisa, Italy.}
\email[]{nicola.denitti@unipi.it}

\author[Y.~Wang]{Yun Wang}
\address[Y.~Wang]{Soochow University, School of Mathematical Sciences, Center for dynamical systems and differential equations, 215006 Suzhou, P.\,R.~China.}
\email[]{ywang3@suda.edu.cn}

\author[S.~Zhang]{Shaoheng Zhang}
\address[S.~Zhang]{Soochow University, School of Mathematical Sciences, 215006 Suzhou, P.\,R.~China.}
\email[]{20234007008@stu.suda.edu.cn}

\keywords{Magnetohydrodynamics; asymptotic stability; Landau solutions; decay rates.}

\subjclass[2020]{%
35Q35, 
35B40, 
76W05, 
76D05. 
}

\begin{abstract}
We consider the three-dimensional incompressible MHD system. Any weak solution satisfying a strong energy inequality is $L^2$-asymptotically stable around a Landau solution.
Under an additional integrability assumption on the initial perturbation, we also obtain an explicit algebraic decay rate for the $L^2$-norm of the velocity and magnetic perturbations.

\end{abstract}

\maketitle

\section{Introduction}
\label{sec:intro}

\subsection{The MHD and Navier--Stokes systems}\label{ssec:mhd}

The \textit{magnetohydrodynamic} (MHD) \textit{equations} describe the behavior of an electrically conducting, incompressible, viscous fluid and play a fundamental role in astrophysics, geophysics, plasma physics, and in certain industrial applications. An introduction to the topic can be found in \cite{Davidson_Book_2nd_2017}.

We consider the initial value problem for the MHD system in three space dimensions: 
\begin{align}\label{eq:IVPMHD}
\begin{cases}
\pt\bu + \bu \cdot \nabla \bu + \nabla p = \nu\, \Delta \bu + \bB \cdot \nabla \bB+\Bf, &  (\bx,t) \in \mathbb{R}^3\times(0,\infty), \\
\pt\bB + \bu \cdot \nabla \bB =  \bB \cdot \nabla \bu + \mu\, \Delta \bB, &  (\bx,t) \in \mathbb{R}^3\times(0,\infty), \\
\nabla \cdot \bu = 0, &  (\bx,t) \in \mathbb{R}^3\times(0,\infty), \\
\nabla \cdot \bB = 0, &  (\bx,t) \in \mathbb{R}^3\times(0,\infty), \\
\bu(\bx,0)=\bu_0(\bx), &  \bx \in \mathbb{R}^3,\\
\bB(\bx,0)=\bB_0(\bx), &  \bx \in \mathbb{R}^3.
\end{cases}
\end{align}
Here, the unknown functions $\bu,\bB,p$ denote the velocity, magnetic field, and pressure, respectively; the data $\bu_0,\bB_0$ and $\Bf$ represent the given initial velocity, initial magnetic field, and external force. 

The equations \cref{eq:IVPMHD}$_{1,3}$ constitute the Navier--Stokes system with an external Lorentz force, while \cref{eq:IVPMHD}$_{2,4}$ are derived from Maxwell's equations for the magnetic field $\boldsymbol{B}$. In particular, in the absence of the magnetic field (i.\,e., when $\boldsymbol{B} \equiv \boldsymbol{0}$), the MHD system \cref{eq:IVPMHD} reduces to the \emph{Navier--Stokes equations}, that is 
\begin{align}\label{eq:IVPNSE}
\begin{cases}
\pt\bu + \bu \cdot \nabla \bu + \nabla p =\nu \Delta \bu +\Bf, &  (\bx,t) \in \mathbb{R}^3\times(0,\infty), \\
\nabla \cdot \bu = 0, &  (\bx,t) \in \mathbb{R}^3\times(0,\infty), \\
\bu(\bx,0)=\bu_0(\bx), &  \bx \in \mathbb{R}^3.
\end{cases}
\end{align}

The existence of weak solutions to the Navier--Stokes system with zero external force ($\Bf\equiv \mathbf0$) goes back to Leray \cite{Leray1934}, for any divergence-free initial data $\bu_0\in L^2(\mathbb{R}^3)$; an analogous result for nonzero $\Bf$ can be found in  \cite[Chapter III]{Temam1979}. Leray also raised the question whether the $L^2$-norm of a weak solution tends to zero as $t\to\infty$. This was answered positively by Kato \cite{Kato1984} for strong solutions, and by Masuda \cite{Masuda1984} for weak solutions satisfying a strong energy inequality. Later, Schonbek \cite{Schonbek1985, Schonbek1986} established explicit decay rates for the $L^2$-norm using the Fourier splitting technique. 

For the MHD equations \cref{eq:IVPMHD}, several existence results have been established. In two spatial dimensions, when $\nu > 0$ and $\mu > 0$, global well-posedness for the Cauchy
problem has been obtained in  \cite{MR716200,MR346289} for bounded domains and in \cite{MR1616418} for the whole space. Using the  vorticity-current formulation, local-in-time existence and regularity was more recently obtained in \cite{zbMATH08005764} for $L^1$ initial data. In three dimensions, local-in-time well-posedness is contained as well in \cite{MR716200,MR346289,MR1616418} , while global-in-time well-posedness
for small initial data was later obtained in \cite{MR3780143}.\footnote{~For the inviscid MHD equations (i.\,e., $\nu = 0$ and $\mu > 0$), global well-posedness was
proved in \cite{MR1007099} in the two-dimensional setting. In three dimensions, local
well-posedness was established in \cite{MR2507450}, while global existence together with
local-in-time well-posedness in critical Besov spaces was obtained in \cite{MR4496608}.

In the case of vanishing magnetic resistivity (i.\,e., $\mu = 0$ and $\nu > 0$), local
existence and uniqueness in two-dimensional spatial domains were proved in
\cite{MR2265204,MR3217057,MR2772455}. Local existence of solutions in Besov spaces, in both
two and three dimensions, can be found in \cite{MR3415680,MR3590662}. Global well-posedness
in this regime has been achieved in \cite{MR3766969} for a special geometry, namely an
infinite slab, and for particular initial data given by a uniform non-parallel magnetic
field.

Finally, for the ideal MHD equations (i.\,e., $\mu = \nu = 0$), the local existence of strong
solutions was established in \cite{MR952901,MR1257135}; additional existence results can be found in \cite{zbMATH07327027,zbMATH05675031,zbMATH07635066,zbMATH08053923,zbMATH06383463}.}

The time-asymptotic behavior of solutions to the MHD equations has also been widely studied. In \cite{SSS1996}, lower and upper bounds are obtained for the decay rates of both the total and magnetic energies for solutions in $\mathbb{R}^n$, $2 \le n \le 4$. It is shown that weak solutions with large initial data outside the class of functions with radially equi-distributed energy exhibit algebraic decay, whereas radially equi-distributed initial data lead to exponential decay. In \cite{AgapitoSchonbek2007}, the long-time behavior of solutions in  $\mathbb{R}^n$, $2 \le n \le 3$, is investigated. In the absence of magnetic diffusion, it is proved that, provided strong bounded solutions exist, the total energy cannot exhibit asymptotic oscillations. When magnetic diffusion is present and the initial data belong only to $L^2$, solutions are shown to decay to zero without a uniform rate, and this lack of uniformity is optimal. Some decay results in exterior domains are available in \cite{MR894160}.

\subsection{Landau solutions for stationary Navier--Stokes and MHD equations}

In this work, we focus on the long-time behavior of weak solutions to the three-dimensional incompressible MHD system \cref{eq:IVPMHD} near a stationary \textit{Landau solution}.

Stationary solutions to the Navier--Stokes equations that are homogeneous of degree $-1$ in $\mathbb{R}^3\setminus\{\mathbf0\}$ are the Landau solutions introduced in \cite{Landau1944}; they are axisymmetric without swirl (see \cref{ssec:existence} for the definition and explicit formulas, or \cite[Section 8.2]{Tsai2018}).

Tian and Xin \cite{TianXin1998}, and Cannone and Karch \cite{CannoneKarch2004} proved that all $(-1)$-homogeneous, axisymmetric solutions in $C^2\left(\mathbb{R}^3 \backslash\{\mathbf0\}\right)$ are Landau solutions.  Later, in \cite{Sverak2011}, \v{S}ver\'{a}k proved the assertion without the assumption of axisymmetry. 

We stress that we focus on the three-dimensional case. Indeed, for $n = 2$, the problem of finding  $(-1)$-homogeneous solutions reduces to studying an ODE on the circle $\mathbb S^1$, where a reasonably complete description of solution can be obtained in terms of elliptic functions; under the additional constraint that $\nabla \cdot \boldsymbol{u} = 0$ across the origin, there exists (modulo rotations) a countable family of $(-1)$-homogeneous solutions that are smooth away from the origin. On the other hand, for $n \ge 4$, there are no non-trivial $(-1)$-homogeneous solutions that are smooth away from the origin. We refer to \cite{Sverak2011} and the references therein for further details.

Landau solutions plays a fundamental role in stationary Navier--Stokes flows: Korolev and \v{S}ver\'{a}k \cite{KorolevSverak2011} showed that the asymptotic leading term of small solutions in exterior domains of $\mathbb{R}^3$ must be a Landau solution; Miura and Tsai \cite{MiuraTsai2012} proved that the leading term of a point singularity like $|\bx|^{-1}$ at $\bx=\mathbf0$ is also given by a Landau solution when it is sufficiently small.

For the stationary MHD system, the pair $(\bu, \bB)$ is a $(-1)$-homogeneous, axisymmetric solution whenever $\bu$ is a Landau solution and $\bB \equiv \boldsymbol 0$. The converse direction has been investigated under different structural assumptions. For the system \textit{without magnetic diffusion} (i.\,e., $\mu=0$), Zhang \cite{Zhang2025MHDwithoutMD} showed that the converse holds provided the magnetic field $\bB$ has only the swirl component. In the case \textit{with magnetic diffusion} (i.\,e., $\mu >0$), Zhang, Wang, and Wang \cite{ZhangWangWang2026} proved that if $(\bu, \bB)$ is a smooth axisymmetric self-similar solution with $\bB = B^\theta(\rho, \varphi) \, \boldsymbol{e}_\theta$ and satisfies the pointwise decay condition $|\bu(\bx)| \leq \varepsilon |\bx|^{-1}$ for some small $\varepsilon > 0$, then $\boldsymbol{u}$ must be a Landau solution and $\boldsymbol{B}$ vanishes. 
This condition was later refined by Zhang \cite{Zhang2025MHDwithMD}, who showed that the conclusion holds under the weaker assumption that the cylindrical radial component $u^r \le \beta$ on the unit sphere for some constant $\beta < \frac{2\sqrt{2}}{3}$.

\subsection{Perturbations of the Landau solution}\label{ssec:perturb-setting}

For simplicity, let us fix $\nu=\mu=1$. Consider initial data of the form $\bu_0=\bU^\bb+\bw_0,\ \bB_0$ and $\Bf=\bb\delta_{\mathbf{0}}$, where $\bU^\bb$ is the Landau solution corresponding to a vector field $\bb$ (as defined in \cref{ssec:existence}) and $\bw_0, \bB_0 \in L^2_{\sigma}(\mathbb{R}^3)$ (cf.~\cref{ssec:weakLpspace} for the notation), provided $|\bb|>0$ is sufficiently small.
Throughout this paper, $\delta_{\mathbf{0}}$ denotes the Dirac distribution, while $
\delta, \tilde \delta,  \delta_1,\delta_2,\dots$ are positive constants.


Denote by $(\bu(\bx,t), \bB(\bx,t), p(\bx,t))$ the solution to the MHD equations \cref{eq:IVPMHD}. Setting $\bu(\bx,t)=\bU^{\bb}(\bx)+\bw(\bx,t)$ and $p(\bx,t)=P^\bb(\bx)+\pi(\bx,t)$, where $(\bU^\bb,P^\bb)$ is the Landau solution defined in \cref{ssec:existence},  we obtain the perturbed system
\begin{align}\label{eqn:PerturbedEqn}
\begin{cases}
\pt \bw + \bw \cdot \nabla \bw + \bU^\bb\cdot \nabla\bw+\bw\cdot \nabla \bU^\bb+\nabla \pi = \Delta\bw + \bB \cdot \nabla \bB, & (\bx,t)\in \mathbb{R}^3\times(0,\infty),\\
\pt \bB+\bw \cdot \nabla \bB+ \bU^\bb \cdot \nabla \bB=\Delta \bB + \bB \cdot \nabla \bw + \bB\cdot \nabla \bU^\bb, & (\bx,t)\in \mathbb{R}^3\times(0,\infty),\\
\nabla \cdot \bw=0, & (\bx,t)\in \mathbb{R}^3\times(0,\infty),\\
\nabla \cdot \bB=0, & (\bx,t)\in \mathbb{R}^3\times(0,\infty),\\
\bw(\bx,0)=\bw_0(\bx), & \bx \in \mathbb{R}^3,\\
\bB(\bx,0)=\bB_0(\bx), & \bx \in \mathbb{R}^3.
\end{cases}
\end{align}

When $\bB \equiv \mathbf{0}$, system \cref{eqn:PerturbedEqn} reduces to the perturbed Navier--Stokes equations around the Landau solution:
\begin{align}\label{eqn:PerturbedEqnNS}
\begin{cases}
\partial_t \bw + \bw \cdot \nabla \bw + \bU^\bb \cdot \nabla \bw + \bw \cdot \nabla \bU^\bb + \nabla \pi = \Delta \bw, & (\bx,t) \in \mathbb{R}^3 \times (0,\infty), \\
\nabla \cdot \bw = 0, & (\bx,t) \in \mathbb{R}^3 \times (0,\infty), \\
\bw(\bx,0) = \bw_0(\bx), & \bx \in \mathbb{R}^3.
\end{cases}
\end{align} 
In \cite{KarchPilarczyk2011}, Karch and Pilarczyk showed that if $|\bb|$ is sufficiently small and $\bw_0 \in L^2_\sigma(\mathbb{R}^3)$, there exists at least one weak solution $\bw$ to \eqref{eqn:PerturbedEqnNS} satisfying a strong energy inequality, and that any such solution is $L^2$--asymptotically stable, i.\,e., $\|\bw(t)\|_2 \to 0$ as $t \to \infty$. Under the additional assumption $\bw_0 \in L^q(\mathbb{R}^3)$ for some $\frac{6}{5} < q < 2$, the solution also obeys the decay estimate $\|\bw(t)\|_2 \leq C t^{-\frac{3}{2}(\frac{1}{q} - \frac{1}{2})}$. 
Subsequently, Karch, Pilarczyk, and Schonbek \cite{KarchPilarczykSchonbek17} generalized the $L^2$-stability of the Landau solution to a broader class of global-in-time solutions, including singular and time-dependent flows, provided the background solution is small enough.
The energy decay rate established in \cite{KarchPilarczyk2011} was further extended by Hishida and Schonbek \cite{HishidaSchonbek16}, who proved that for any $q\in [1,2)$, there exists a constant $\delta=\delta(q)>0$ such that if $|\bb|\le \delta$, the typical decay rate $t^{-\frac{3}{2}(\frac{1}{q}-\frac{1}{2})}$ also holds, thus extending the admissible range of $q$. 

\v{S}ver\'{a}k \cite{Sverak2011} proved that every $(-1)$-homogeneous non-zero solution of the stationary Navier--Stokes equations in $C^2(\mathbb{R}^3\setminus\{\mathbf0\})$ is a Landau solution.  
In \cite{LLY18I,LLY18II,LLY19III}, the authors studied $(-1)$-homogeneous axisymmetric solutions that are smooth away from the symmetry axis $\{x_1 = x_2 = 0\}$ and may be singular along the entire axis. In this class, all no-swirl solutions were classified in \cite{LLY18I,LLY18II}, while solutions with non-zero swirl were constructed in \cite{LLY18I,LLY19III}.  Among these solutions, the least singular ones (those with sufficiently mild singularities along the axis) were shown in \cite{LiYan2021} to be asymptotically stable under $L^2$-perturbations, thereby extending the stability result of \cite{KarchPilarczyk2011} beyond the Landau case.  

In \cite{LZZ23}, the $L^3$-asymptotic stability of Landau solutions was studied:  if $\boldsymbol{w}_0$ and the background Landau solution are sufficiently small, then there exists a unique global solution with $\|\boldsymbol{w}(t)\|_{3} \to 0$ as $t \to \infty$.   On the other hand, in \cite{zbMATH08070132}, the authors showed that the asymptotic stability around Landau solutions may fail for some initial perturbations in $L^{3,\infty}$ (regardless of how small the Landau solution or the initial perturbations are in $L^{3,\infty}$). 

Finally, in \cite{zbMATH07471753}, the authors studied the stability and asymptotic behavior of singular solutions to the 3D incompressible Navier--Stokes equations with singular external forces which are either singular finite measures or more general tempered distributions
with bounded Fourier transforms (including, in particular, the Landau solution).

We now define the notion of a weak solution to \cref{eqn:PerturbedEqn}.
For initial data $\bw_0, \bB_0\in L^2_{\sigma}$, $(\bw,\bB)=(\bw(\bx,t),\bB(\bx,t))$ is called a \textit{weak solution} if $\bw,\bB\in X$, where
\[
X\coloneqq C_w \big( [0,\infty); L^2_\sigma (\R^3)\big)\cap L^2\big([0,\infty); \dot H_\sigma^1(\R^3)\big),
\]
and if for all $t\geq s\geq 0$ and all test functions $\boldsymbol{\vp}$ belonging to
\begin{align}\label{eqn:testfcnspace}
\boldsymbol{\vp} \in C\big( [0, \infty); L^2_\sigma \cap L^3 \big), \quad 
\nabla \boldsymbol{\vp} \in L^2_{\mathrm{loc}}\big( [0,\infty); L^2 \big), \quad 
\partial_\tau \boldsymbol{\vp} \in L^2_{\mathrm{loc}}\big( [0,\infty); (H^1_\sigma)^* \big),
\end{align}
the following identities hold:
\begin{equation}\label{eqn:weak}
\begin{aligned}
&\langle \bw(t), \boldsymbol{\varphi}(t)  \rangle
+\int_s^t \Big[ 
\langle \bw\cdot  \nabla \bw, \boldsymbol{\varphi} \rangle
- \langle \bB\cdot  \nabla \bB, \boldsymbol{\varphi} \rangle
+ a_1(\bw,\boldsymbol{\vp})
\Big]\, \mathrm d\tau 
\\ &\qquad= \langle \bw(s), \boldsymbol{\varphi}(s)  \rangle 
+ \int_s^t \langle \bw, \boldsymbol{\varphi}_\tau \rangle \, \mathrm d\tau,\\
&\langle \bB(t), \boldsymbol{\varphi}(t)  \rangle
+\int_s^t \Big[ 
\langle \bw\cdot  \nabla \bB, \boldsymbol{\varphi} \rangle
-\langle \bB\cdot  \nabla \bw, \boldsymbol{\varphi} \rangle
+ a_2(\bB,\boldsymbol{\vp})
 \Big]\, \mathrm d\tau 
 \\ &\qquad= \langle \bB(s), \boldsymbol{\varphi}(s)  \rangle 
 + \int_s^t \langle \bB, \boldsymbol{\varphi}_\tau \rangle \, \mathrm d\tau.
\end{aligned}
\end{equation}
Here, $\langle \cdot,\cdot \rangle$ denotes the standard $L^2$-inner product, and $a_1(\cdot,\cdot)$ and $a_2(\cdot,\cdot)$ are defined in \cref{eqn:bilinearform}.
The test function space is a special case of the more general framework introduced in \cite[eq.~(2.16)]{HishidaSchonbek16}.

\subsection{Main results}\label{ssec:main}
This work investigates the long-time behavior and decay rates of weak solutions to \cref{eqn:PerturbedEqn} that obey a strong energy inequality of the form
\begin{align}\label{eqn:energyineq1}
\begin{aligned}
&\|\bw(t)\|_2^2 + \|\bB(t)\|_2^2 + 2 \int_s^t \bigl( \| \nabla \bw(\tau)\|_2^2 + \| \nabla \bB(\tau)\|_2^2 \bigr) \,\mathrm d \tau\\
& \le \|\bw(s)\|_2^2 + \|\bB(s)\|_2^2 - 2 \int_s^t \bigl( \langle (\bw \cdot \nabla) \bU^\bb, \bw \rangle - \langle (\bB \cdot \nabla) \bU^\bb, \bB \rangle \bigr) \,\mathrm d \tau,
\end{aligned}
\end{align}
which holds for almost every $s \ge 0$ (including $s=0$) and all $t \ge s$. We note that, as in \cite[Theorem 2.1]{KarchPilarczyk2011}, Galerkin's method yields the existence of (at least) one weak solution to \cref{eqn:PerturbedEqn} satisfying \cref{eqn:energyineq1} provided $\bU^\bb$ is small.

Our first main result is as follows.
\begin{theorem}[Asymptotic stability]\label{thm:stability}
There exists a constant $\delta>0$ such that if $0<|\bb|\le \delta$, then every weak solution $(\bw(t),\bB(t))$ to \cref{eqn:PerturbedEqn} with \cref{eqn:energyineq1} satisfies
\begin{align*}
    \lim\limits_{t\to \infty} \|\bw(t)\|_2=\lim\limits_{t\to \infty} \|\bB(t)\|_2=0,
\end{align*}
provided $\bw_0, \bB_0 \in L^2_{\sigma}(\R^3)$.
\end{theorem}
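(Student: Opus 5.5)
Following Masuda and Karch--Pilarczyk, we plan to approximate the initial perturbation by a nice datum, for which a global solution can be constructed and shown to decay, and then compare it with the weak solution through the strong energy inequality \cref{eqn:energyineq1}.

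The first ingredient is the Hardy-type bound
\[
\bigl|\langle (\bv\cdot\nabla)\bU^\bb,\bv\rangle\bigr| = \bigl|\langle \bU^\bb\otimes \bv,\nabla \bv\rangle\bigr| \le \|\,|\bx|\bU^\bb\|_\infty \Bigl\|\frac{\bv}{|\bx|}\Bigr\|_2\|\nabla \bv\|_2\le C|\bb|\,\|\nabla \bv\|_2^2 .
\]
It uses $|\bU^\bb(\bx)|\le C|\bb|/|\bx|$ and Hardy's inequality. The same bound holds with $\bv$ replaced by $\bB$.

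Taking $\delta$ so small that $2C\delta\le 1$, the inequality \cref{eqn:energyineq1} then gives, for a.e.\ $s$ and all $t\ge s$,
\[
E(t)+\int_s^t \bigl(\|\nabla\bw\|_2^2+\|\nabla\bB\|_2^2\bigr)\,d\tau\le E(s),
\]
where $E=\|\bw\|_2^2+\|\bB\|_2^2$. In particular $E$ is bounded, $\nabla\bw,\nabla\bB\in L^2(0,\infty;L^2)$, and $E$ is nonincreasing off a null set.

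Next, fix $\ve>0$ and choose $s_0$ such that $\bw(s_0),\bB(s_0)$ satisfy the energy inequality from $s_0$ on and $\|\nabla \bw(s_0)\|_2+\|\nabla\bB(s_0)\|_2$ is as small as needed. This is possible because $\int_0^\infty\|\nabla(\bw,\bB)\|_2^2<\infty$. Split the data as $\bw(s_0)=\bv_0+(\bw(s_0)-\bv_0)$ with $\bv_0\in L^2_\sigma\cap \dot H^1$ small in $\dot H^{1/2}$ and with $L^2$-small remainder; we do the same for $\bB$. On the smooth part we would solve the linearized-plus-nonlinear problem \cref{eqn:PerturbedEqn} starting at time $s_0$ with data $(\bv_0,\bB_0')$. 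For small $|\bb|$ and small $\dot H^{1/2}$ data, a Kato-type fixed point (the operator $\Delta-\bU^\bb\cdot\nabla-\cdot\nabla \bU^\bb$ generates a semigroup with $L^p$--$L^q$ estimates when $|\bb|$ is small, as in Karch--Pilarczyk) gives a global strong solution $(\bv,\boldsymbol{H})$. This solution satisfies an energy equality, and its $L^2$ norm tends to $0$ because of the decay of the perturbed Stokes semigroup on $L^2$ data combined with a Fourier splitting / density argument.

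Then set $\bw-\bv$ and $\bB-\boldsymbol H$ as differences. Using $(\bv,\boldsymbol H)$ as test functions in \cref{eqn:weak} (they lie in the class \cref{eqn:testfcnspace}) and combining the result with the energy inequality for $(\bw,\bB)$ and the energy equality for $(\bv,\boldsymbol H)$, we derive
\[
\|(\bw-\bv)(t)\|_2^2+\|(\bB-\boldsymbol H)(t)\|_2^2\le \|(\bw-\bv)(s_0)\|_2^2+\|(\bB-\boldsymbol H)(s_0)\|_2^2+\text{(trilinear terms)}.
\]
The trilinear terms, such as $\langle (\bw-\bv)\cdot\nabla \bv,\bw-\bv\rangle$ and the magnetic cross terms $\langle \bB\cdot\nabla\boldsymbol H,\cdot\rangle$, are handled by a Gronwall argument using $\int\|\nabla\bv\|_\infty$ or $\int\|\bv\|_{L^{3}}$-smallness times $\|\nabla(\bw-\bv)\|_2^2$. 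The MHD cancellations $\langle \bB\cdot\nabla\bB,\bw\rangle+\langle \bB\cdot\nabla\bw,\bB\rangle=0$ are used as in the energy identity, and the $\bU^\bb$ terms are again absorbed by the Hardy bound above. Hence $\limsup_t E(t)^{1/2}\le C\ve+\limsup_t\|(\bv,\boldsymbol H)(t)\|_2= C\ve$, and we conclude by letting $\ve\to0$.

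The main obstacle is this weak--strong comparison. Justifying $(\bv,\boldsymbol H)$ as a legitimate test function for all $t\ge s_0$ requires the global regularity of $(\bv,\boldsymbol H)$ with $\bv,\boldsymbol H\in L^\infty(L^3)$ small. It also requires controlling the coupled trilinear terms with smallness constants independent of $t$. We would first try small $L^3$ (rather than $\dot H^{1/2}$) data, so that $\|(\bv,\boldsymbol H)\|_{L^\infty L^3}$ is small and every trilinear term is bounded by $C\|\cdot\|_{3}\|\nabla(\cdot)\|_2^2$ and absorbed into the dissipation without Gronwall.
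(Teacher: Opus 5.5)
\paragraph{The gap: decay of the auxiliary strong solution.} Your preliminary steps (Hardy absorption, monotonicity of $E$, the choice of $s_0$) are fine in outline. So is the weak--strong comparison in the class of solutions small in $L^\infty_tL^3_x$. The problem is that the step which actually produces decay is only asserted. You claim $\|(\bv,\boldsymbol H)(t)\|_2\to0$ ``because of the decay of the perturbed Stokes semigroup on $L^2$ data combined with a Fourier splitting / density argument''. That sentence carries the whole theorem.

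To see why, note that $\|\bw(s_0)\|_{\dot H^{1/2}}\le\|\bw(s_0)\|_2^{1/2}\|\nabla\bw(s_0)\|_2^{1/2}$ and that $\|\bw(s_0)\|_2$ is bounded by the energy. So your choice of $s_0$ already makes $\bw(s_0)$ and $\bB(s_0)$ small in $\dot H^{1/2}\subset L^3$, and the splitting is vacuous. You may take $\bv_0=\bw(s_0)$ and $\boldsymbol H_0=\bB(s_0)$. Your comparison then gives $(\bw,\bB)=(\bv,\boldsymbol H)$ on $[s_0,\infty)$. What remains is exactly the $L^2$-decay of a solution of \eqref{eqn:PerturbedEqn} whose data have no integrability better than $L^2$.

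Density together with an $L^q$--$L^2$ bound disposes only of the linear term $e^{-tL_i}\bv_0$. You must still show that $\int_0^t e^{-(t-\tau)L_1}\mathbb P\nabla\cdot(\bv\otimes\bv-\boldsymbol H\otimes\boldsymbol H)\,d\tau$ and its magnetic analogue $\int_0^t e^{-(t-\tau)L_2}\mathbb P\nabla\cdot(\bv\otimes\boldsymbol H-\boldsymbol H\otimes\bv)\,d\tau$ tend to zero in $L^2$. Nothing in the proposal estimates these terms. Choosing $\bv_0\in C^\infty_{c,\sigma}$ instead would give a linear rate, but the same nonlinear estimate would still be missing.

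\paragraph{How the paper supplies it.} The missing estimate is the core of the paper's proof. There it is carried out directly on the weak solution, which makes the strong solution and the weak--strong machinery unnecessary.
\begin{enumerate}
\item Testing \eqref{eqn:weak} with $e^{-(s-\tau)L_i^*}\boldsymbol\psi$ yields the Duhamel inequalities \eqref{eqn:estimate2-1}--\eqref{eqn:estimate2-2}.
\item Monotonicity of $E$ bounds $\|\bw(t)\|_2+\|\bB(t)\|_2$ by $\sqrt2$ times its average over $s\in[t/2,t]$.
\item The averaged linear part vanishes by density and \eqref{eqn:decay1} with $q=3/2$.
\item For the nonlinear part, use \eqref{eqn:decay2} with $r=3/2$ and $\|\bu\otimes\bv\|_{3/2}\le C(\|\bu\|_2\|\nabla\bu\|_2\|\bv\|_2\|\nabla\bv\|_2)^{1/2}$. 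This reduces it to the convolution of $|\cdot|^{-3/4}\in L^{4/3,\infty}$ with $h^{1/2}\in L^2(0,\infty)$, where $h=\|\nabla\bw\|_2^2+\|\nabla\bB\|_2^2$. The weak Young inequality makes its average over $[t/2,t]$ of order $O(t^{-1/4})$.
\end{enumerate}

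\paragraph{What your route would require.} If you keep your route, you must supply an argument of this kind for $(\bv,\boldsymbol H)$. An alternative is Kato-type weighted bounds $\sup_t t^{\frac12-\frac{3}{2p}}\|(\bv,\boldsymbol H)(t)\|_p\lesssim\|(\bv_0,\boldsymbol H_0)\|_3$ for some $p>3$, followed by a $\limsup$ argument. You must also establish the $L^3$-based smoothing estimates for $e^{-tL_i}$ and $e^{-tL_i}\mathbb P\nabla\cdot$ that your global fixed point relies on. The $L^2$ theory of \cref{thm:analyticsemigroup} does not provide them. Estimates of the type in \cite{HishidaSchonbek16} do, but only with $|\bb|$ small depending on the exponents.
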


Under additional integrability assumptions on initial data, we also get the decay rate of $\|\bw(t)\|_2$ and $\|\bB(t)\|_2$.

\begin{theorem}[Energy decay rate]\label{thm:decay}
    Let $1\le q<2$. There exists a constant $\tilde\delta=\tilde\delta(q)>0$ such that if $0<|\bb|\le \tilde\delta$, then every weak solution $(\bw(t),\bB(t))$ to \cref{eqn:PerturbedEqn} with \cref{eqn:energyineq1}  satisfies
\begin{align*}
    \|\bw(t)\|_2 \le C(t+1)^{-\frac{3}{2}(\frac{1}{q}-\frac{1}{2})},\quad 
    \|\bB(t)\|_2 \le C(t+1)^{-\frac{3}{2}(\frac{1}{q}-\frac{1}{2})},
\end{align*}
provided $\bw_0, \bB_0 \in L^q(\R^3)\cap L^2_{\sigma}(\R^3)$.
\end{theorem}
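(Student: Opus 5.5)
Following Schonbek's Fourier splitting and the Hishida--Schonbek refinement for Navier--Stokes around Landau, the plan has four steps: a dissipative energy inequality, a low-frequency bound, a bootstrap of the decay rate, and a separate treatment for $q$ near $1$.

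\textbf{Step 1: energy inequality.} We use the Hardy-type bound $|\langle (\bv\cdot\nabla)\bU^\bb,\bv\rangle| = |\langle (\bv\cdot\nabla)\bv,\bU^\bb\rangle| \le C|\bb|\,\|\nabla\bv\|_2^2$. This relies on $|\bU^\bb(\bx)|\le C|\bb|/|\bx|$ for small $|\bb|$, together with $\|\bv/|\bx|\|_2\le 2\|\nabla\bv\|_2$. Plugging it into \cref{eqn:energyineq1} with $|\bb|$ small gives, for $E(t)=\|\bw(t)\|_2^2+\|\bB(t)\|_2^2$,
\[
E(t)+\int_s^t \bigl(\|\nabla\bw\|_2^2+\|\nabla\bB\|_2^2\bigr)\,\mathrm d\tau\le E(s)
\]
for a.e.\ $s$ and all $t\ge s$. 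Fourier splitting with the ball $S(t)=\{|\xi|\le g(t)\}$, $g(t)^2=k/(t+1)$ and $k>3$, then yields the differential inequality
\[
\frac{\mathrm d}{\mathrm dt}\bigl[(t+1)^k E\bigr]\le C(t+1)^{k-1}\int_{S(t)}\bigl(|\hat\bw|^2+|\hat\bB|^2\bigr)\,\mathrm d\xi .
\]
It is used in integrated form, which is legitimate via the strong energy inequality by testing against $(t+1)^k$ as in \cite{KarchPilarczyk2011}.

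\textbf{Step 2: low-frequency bound.} We write $\bw,\bB$ by the Duhamel formula, justified by the weak formulation \cref{eqn:weak} with test functions $e^{(t-\tau)\Delta}$ applied to a mollified plane wave. This gives
\[
|\hat\bw(\xi,t)|+|\hat\bB(\xi,t)|\le e^{-|\xi|^2t}\bigl(|\hat\bw_0|+|\hat\bB_0|\bigr)+|\xi|\int_0^t e^{-|\xi|^2(t-\tau)}\Bigl(\|\bw\otimes\bw\|_1+\|\bB\otimes\bB\|_1+\|\bB\otimes\bw\|_1+\|\bU^\bb\otimes\bw\|_{?}+\|\bU^\bb\otimes\bB\|_{?}\Bigr)\,\mathrm d\tau .
\]
The quadratic terms are bounded by $E(\tau)\le E(0)$. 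The linear Landau terms are the difficulty, because $\bU^\bb\notin L^2$: $\bU^\bb\otimes\bw$ is not in $L^1$, so its Fourier transform is not bounded. Following Hishida--Schonbek, I would not use pointwise Fourier bounds for these terms. Instead I would estimate their contribution in $L^2(S(t))$ directly: by duality and Hardy, $\|\mathcal F(\bU^\bb\otimes\bw)\|_{L^2(S)}$ paired with $|\xi|$ is controlled via $\|\,|\bx|\,\bU^\bb\|_\infty\,\|\nabla\bw\|_2$-type bounds. Concretely, $|\xi|\,e^{-|\xi|^2(t-\tau)}$ acting on $\bU^\bb\otimes\bw$ equals $\nabla e^{(t-\tau)\Delta}$ applied to that product, and $\bU^\bb\otimes\bw\in L^{3/2,2}$ or $L^{6/5}$-like spaces (by Hardy, $|\bU^\bb|\,|\bw|\le C|\bb|\,|\bw|/|\bx|$), so that
\[
\|\nabla e^{(t-\tau)\Delta}(\bU^\bb\bw)\|_2\le C|\bb|(t-\tau)^{-1/2}\|\nabla\bw\|_2 .
\]
This piece is then absorbed using the smallness of $|\bb|$ against the dissipation term. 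The low-frequency part of the data contributes $\int_{S}e^{-2|\xi|^2t}|\hat\bw_0|^2\,\mathrm d\xi\le C(t+1)^{-\frac32(\frac2q-1)}$ by Hausdorff--Young for $q>1$; for $q=1$ we use $|\hat\bw_0|\le\|\bw_0\|_1$.

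\textbf{Step 3: bootstrap.} Inserting these bounds into Step 1 gives first a crude rate, e.g.\ $E(t)\le C\log^{-2}(t+e)$ or $(t+1)^{-\alpha}$ for small $\alpha$. Feeding this back into the quadratic terms improves the rate up to $\frac32(\frac2q-1)$, since the quadratic contribution after iteration scales like $(t+1)^{-5/2}$, which is never the bottleneck. At each iteration the Landau contribution has the form
\[
C|\bb|^2(t+1)^{-k}\int_0^t(s+1)^{k-1}\int_0^s(s-\tau)^{-1/2}\dots
\]
and it is absorbed by choosing $|\bb|\le\tilde\delta(q)$. This is where the dependence of $\tilde\delta$ on $q$ enters: the required smallness deteriorates as the target exponent grows, i.e.\ as $q\to1$, because the weight $k$ must grow.

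\textbf{Main obstacle.} I expect the hardest part to be the rigorous handling of the linear term $\bU^\bb\cdot\nabla\bw+\bw\cdot\nabla\bU^\bb$, and of the analogous magnetic terms $\bU^\bb\cdot\nabla\bB-\bB\cdot\nabla\bU^\bb$, in the low-frequency region at the optimal rate for $q$ near $1$. Hardy alone gives only a loss of $(t-\tau)^{-1/2}$ against $\|\nabla\bw\|_2$. For $q<6/5$, to go beyond the Karch--Pilarczyk range, I would first try the Hishida--Schonbek approach. That approach derives $L^q$-type (or weak-$L^q$) bounds for the linearized operator $-\Delta+$(Landau perturbation), valid uniformly for small $|\bb|$ depending on $q$. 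It then uses the resulting $L^q\to L^2$ decay of the perturbed semigroup instead of the heat semigroup in Step 2. The MHD coupling is harmless here because the magnetic linear terms have the same structure and the same smallness factor $|\bb|$.
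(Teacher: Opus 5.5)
There is a genuine gap, and it sits at the linear Landau terms, which carry the whole difficulty. Your treatment of the data and of the quadratic terms in Steps 1--3 is fine. The Landau estimate you propose is
\[
\|\nabla e^{(t-\tau)\Delta}(\bU^\bb\otimes\bw)\|_2\le C|\bb|(t-\tau)^{-1/2}\|\nabla\bw\|_2 ,
\]
so only the dissipation enters the low-frequency integral. For $\tau\in[0,t/2]$ the kernel $\min\{g(t),(t-\tau)^{-1/2}\}$ is of size $t^{-1/2}$ even after the cutoff $|\xi|\le g(t)$. Moreover, $\int_0^{t/2}\|\nabla\bw(\tau)\|_2\,\mathrm d\tau$ is nondecreasing in $t$ and does not tend to zero. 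Hence your bound on the Landau contribution to $\int_{S(t)}(|\hat{\bw}|^2+|\hat{\bB}|^2)\,\mathrm d\xi$ is at best of order $|\bb|^2(t+1)^{-1}$. This caps the outcome at $\|\bw(t)\|_2+\|\bB(t)\|_2\lesssim t^{-1/2}$, which corresponds to $q=6/5$.

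For $1\le q<6/5$ the target rate $\frac32(\frac1q-\frac12)$ exceeds $\frac12$, and smallness of $|\bb|$ cannot help: it absorbs a term only if that term already decays at the target rate. So the claim in Step~3, that the Landau term is absorbed by choosing $|\bb|\le\tilde\delta(q)$, fails exactly in the range where the theorem goes beyond Karch--Pilarczyk. Your ``Main obstacle'' paragraph leaves this open.

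Within your framework, the loss comes from trading $\bU^\bb\otimes\bw$ for $\|\nabla\bw\|_2$. Bounding it instead by $\|\bw\|_2$ should avoid the loss. Since $|\bx|^{-1}\in L^{3,\infty}$, you have $\bU^\bb\otimes\bw\in L^{6/5,2}$ with norm at most $C|\bb|\|\bw\|_2$. Hausdorff--Young in Lorentz spaces then turns the kernel on $S(t)$ into $\min\{g(t)^2,(t-\tau)^{-1}\}$. The resulting Landau term has the same rate as $\|\bw\|_2$ for any rate below $1$, so it could be absorbed. That argument, however, is something you would still have to supply.

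Your fallback, the $L^q\to L^2$ decay of the perturbed semigroup, is indeed the paper's key input (Proposition~\ref{prop:decay}, taken from Hishida--Schonbek). But it cannot simply be inserted into your Step~2. Since $e^{-tL_i}$ is not a Fourier multiplier, you lose the pointwise bound $|\xi|e^{-|\xi|^2(t-\tau)}\|\bw\otimes\bw\|_1$ on which your handling of the quadratic terms rests. Also, the available estimate for $e^{-tL_i}\mathbb P\nabla\cdot\BF$ needs $\BF\in L^r$ with $r>6/5$, not $r=1$.

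The paper therefore drops Fourier splitting altogether:
\begin{itemize}
\item It tests the weak formulation with $e^{-(s-\tau)L_i^*}\boldsymbol\psi$ to obtain $\|\bw(s)\|_2\le\|e^{-sL_1}\bw_0\|_2+\int_0^s\|e^{-(s-\tau)L_1}\mathbb P\nabla\cdot(\bw\otimes\bw-\bB\otimes\bB)\|_2\,\mathrm d\tau$, and the analogue for $\bB$.
\item Via Gagliardo--Nirenberg it bounds the integrand by $(s-\tau)^{-\frac32(\frac1r-\frac12)-\frac12}\|\bw\|_2^{\frac3r-1}\|\nabla\bw\|_2^{3(1-\frac1r)}$.
\item Because $\|\nabla\bw\|_2^2$ is only integrable in time, it controls the time convolution with weak Young and H\"older inequalities in $L^{p,\infty}(0,\infty)$. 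This yields only a weak-$L^\alpha$ bound in time.
\item It turns that weak bound into a pointwise one by averaging over $s\in[t/2,t]$ and using the energy monotonicity $E(t)\le E(s)$.
\item A bootstrap $t^{-1/4}\to t^{-2/5}\to t^{-3/4}$ finishes the proof, since $t^{-3/4}$ dominates $t^{-\frac32(\frac1q-\frac12)}$ for every $q\ge1$.
\end{itemize}
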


These results extend the analysis of decay properties for weak solutions of \cref{eqn:PerturbedEqnNS} satisfying a strong energy inequality analogous to \cref{eqn:energyineq1}, previously carried out in \cite{HishidaSchonbek16}. Unlike \cite{HishidaSchonbek16}, we do not use the Fourier splitting method to obtain the energy decay; instead, we present a streamlined argument that combines ideas from \cite{HishidaSchonbek16}, \cite{KarchPilarczyk2011}, and \cite{BorchersMiyakawa92}. In particular, inspired by \cite{KarchPilarczyk2011}, we start from estimates of the type 
\begin{align*}
\|\bw(s)\|_2 
&\leq \|e^{-sL_1} \bw_0\|_2 
+ \int_0^s \bigl\|e^{-(s-\tau)L_1} \mathbb{P} \nabla\cdot (\bw \otimes \bw - \bB \otimes \bB)\bigr\|_2 \, \mathrm{d}\tau, \\
\|\bB(s)\|_2 
&\leq \|e^{-sL_2} \bB_0\|_2 
+ \int_0^s \bigl\|e^{-(s-\tau)L_2} \mathbb{P} \nabla\cdot (\bw \otimes \bB - \bB \otimes \bw)\bigr\|_2 \, \mathrm{d}\tau,
\end{align*}
(cf.~\crefrange{eqn:estimate2-1}{eqn:estimate2-2} below) and use them to derive the energy decay of solutions. The main difference from \cite{KarchPilarczyk2011} lies in our treatment of the linear and nonlinear terms: we employ the more flexible $L^q$--$L^2$ estimates from \cite{HishidaSchonbek16} (see, ~\crefrange{eqn:decay1}{eqn:decay2}), which yield a more direct proof.

\subsection{Outline} 
The paper is organized as follows. \cref{sec:prelim} collects some preliminary lemmas that will be used. 
In particular, in \cref{ssec:existence}, we recall the explicit form of the Landau solutions to the stationary Navier--Stokes equations and state Hardy's inequality; in \cref{ssec:weakLpspace}, we introduce the weak-$L^p$ (Marcinkiewicz) spaces and present the weak versions of Young's and H\"older's inequalities.
In \cref{sec:linear}, we study the linearized perturbed system, proving that the linear operators generate analytic semigroups and stating the decay rate of the evolution operator. Finally, the proofs of \crefrange{thm:stability}{thm:decay} are carried out in \cref{sec:proofs}.

\section{Preliminaries}
\label{sec:prelim}

\subsection{Laudau solutions and Hardy-type inequality}
\label{ssec:existence}

This subsection is devoted to recalling the Landau solutions to the stationary Navier--Stokes equations and Hardy's inequality.

We begin with some basic facts about the Landau solutions (see e.\,g., \cite[Section 8.2]{Tsai2018}). For each nonzero $\bb \in \mathbb{R}^3$, there exists a unique Landau solution $\bU^\bb$ and an associated pressure $P^\bb$, homogeneous of degrees $-1$ and $-2$ respectively, that solve the stationary Navier--Stokes equations
\begin{align}\label{eq:NS}
\begin{cases}
\bU^\bb \cdot \nabla \bU^\bb + \nabla P^\bb = \Delta \bU^\bb, & \bx \in \R^3\setminus \{\mathbf0\}, \\
\nabla \cdot \bU^\bb = 0, & \bx \in \R^3\setminus \{\mathbf0\},
\end{cases}
\end{align}
and are smooth in $\mathbb{R}^3 \setminus \{\mathbf0\}$. Moreover, in the sense of distributions in $\mathbb{R}^3$, they satisfy
\begin{align*}
-\Delta \bU^\bb + \nabla \cdot (\bU^\bb \otimes \bU^\bb) + \nabla P^\bb = \bb \, \delta_{\mathbf{0}}, \qquad 
\nabla \cdot  \bU^\bb = 0,
\end{align*}
where $\delta_{\mathbf{0}}$ denotes the Dirac distribution.

When $\bb = (0,0,\beta)$ with $\beta>0$, the Landau solution admits an explicit representation in spherical coordinates. Let $\bx = (\rho\sin\varphi\cos\theta,\rho\sin\varphi\sin\theta, \rho\cos\varphi)$ and define the orthonormal basis vectors
\[
\be_{\rho}=\frac{\bx}{\rho}, \quad \be_{\theta}=(-\sin\theta, \cos\theta,0), \quad \be_{\varphi}=\be_{\theta}\times \be_{\rho}.
\]
Then for any $A>1$,
\begin{align}\label{eqn:Landausolution}
    \bU^\bb =\frac{2}{\rho} 
    \left[
    \left(\frac{A^2-1}{(A-\cos\varphi)^2}-1\right)\be_{\rho}
    +\frac{-\sin\varphi}{A-\cos\varphi}\be_{\varphi}
    \right], \quad
    P^\bb =\frac{4(A \cos\varphi-1)}{\rho^2 (A-\cos\varphi)^2},
\end{align}
where the parameter $\beta$ is related to $A$ by
\[
\beta = 16 \pi 
\left[A + \frac 12 A^2 \log \left(\frac {A-1}{A+1}\right) + \frac{4A}{3(A^2-1)}\right].
\]
The corresponding Landau solution for a general $\bb$ is obtained by rotation.

There exist $\ve_0>0$ and $C_0>0$ such that for any $\bb \in \R^3$ with $0 < |\bb| < \ve_0$, 
\begin{align}\label{eqn:LandauEst}
    |\bU^\bb(\bx)|\le\frac{C_0|\bb|}{|\bx|},\quad |\nabla \bU^\bb(\bx)|\le\frac{C_0|\bb|}{|\bx|^2},\quad \text{in }\R^3\setminus\{\mathbf0\}.
\end{align}
Indeed, a direct computation shows that for $\bb=(0,0,\beta)$ with sufficiently small $|\beta|>0$,
\[
|\bU^\bb(\bx)| \lesssim \frac{1}{|A|} \frac{1}{|\bx|},\quad
|\nabla \bU^\bb(\bx)| \lesssim \frac{1}{|A|} \frac{1}{|\bx|^2}.
\]
Combining this with $\frac{\beta(A)}{16 \pi}=\frac{1}{A}+o(\frac{1}{A})$ as $A\to \infty$ yields \eqref{eqn:LandauEst}.

A vector field $\bv$ is called \emph{axisymmetric} if it is of the form
\[
\bv=v^\rho(\rho,\vp) \be_\rho + v^\theta(\rho,\vp) \be_\theta + v^\vp(\rho,\vp) \be_\vp,
\]
with $v^\rho, v^\theta, v^\vp$ independent of $\theta$. 
It is called \emph{without swirl} if $v^\theta = 0$.

Finally, we recall the following \textit{Hardy-type inequality}:
\begin{align}\label{eqn:Hardy}
    \int_{\R^3} \frac{|w(\bx)|^2}{|\bx|^2}\, \mathrm{d}x \le 4 \int_{\R^3} |\nabla w(\bx)|^2 \, \mathrm{d}x \quad \text{for all } w\in \dot{H}^1(\R^3),
\end{align}
a proof of which is given in \cite[Chapter I.6]{Leray1934}.\footnote{~We remark that \cref{eqn:Hardy} is valid when $n\ge 3$, with the constant $4$ replaced by $\frac{4}{(n-2)^2}$  (see for example \cite[Theorem 2.1]{AGG2006}); this restriction constitutes a technical difficulty in the analysis of the stability of $(-1)$-homogeneous solutions in $\mathbb R^2$.}

\subsection{Notations and inequalities in weak \texorpdfstring{$L^p$}{Lp} spaces}
\label{ssec:weakLpspace}

The following notation will be used throughout the paper. Let $L^p_\sigma(\mathbb{R}^3)$ denote the completion of $C_{0,\sigma}^{\infty}(\mathbb{R}^3)$ in the $L^p$-norm $\|\cdot\|_{L^p(\mathbb{R}^3)}$, where $C_{0,\sigma}^{\infty}(\mathbb{R}^3)$ is the space of smooth, compactly supported, divergence-free vector fields.

For $1<p<\infty$, let $L^{p,\infty}=L^{p,\infty}(\mathbb{R})$ denote the \textit{weak Marcinkiewicz spaces}, i.\,e., Banach spaces of measurable functions on $\mathbb{R}$ equipped with the norm
\begin{align}\label{eqn:weakLpnorm}
\|f\|_{p,\infty}= \sup
\left\{|E|^{-1+\frac{1}{p}} \int_E |f(s)| \, \mathrm{d}s: E \in \mathcal{B}\right\}.
\end{align}
Here $|E|$ denotes the Lebesgue measure of a measurable set $E$, and $\mathcal{B}$ is the collection of all Borel sets with finite positive measure.

One can verify that $|\cdot|^{-\frac{1}{p}} \in L^{p,\infty}(0,\infty)$ for $p>1$. This follows from the Hardy--Littlewood inequality $\int_E s^{-\frac{1}{p}} \, \mathrm{d}s \le \int_0^{|E|} s^{-\frac{1}{p}} \, \mathrm{d}s$ together with \cref{eqn:weakLpnorm}.
Recall also the well-known embedding $L^p \subset L^{p,\infty}$, which is a consequence of H\"older's inequality and \cref{eqn:weakLpnorm}.

The following lemma provides weak versions of Young's and H\"older's inequalities (see  \cite[Lemma 2.2]{BorchersMiyakawa92} for a detailed proof).

\begin{lemma} 
Let $f\in L^{p,\infty}$ and $g\in L^{q,\infty}$ with $1<p,q<\infty$.
{
\renewcommand{\descriptionlabel}[1]{\emph{#1}}
\begin{description}
\item[Weak Young's inequality:] If $1+\frac{1}{r}=\frac{1}{p}+\frac{1}{q}$ with $1<r<\infty$, then the convolution $f\ast g$ belongs to $L^{r,\infty}$ and satisfies
\begin{align}\label{eqn:Youngineq}
\|f \ast g\|_{r,\infty} \le C_{p,q} \|f\|_{p,\infty} \|g\|_{q,\infty},
\end{align}
where $C_{p,q}>0$ is a constant depending only on $p$ and $q$.

\item[Weak H\"older's inequality:] If $\frac{1}{r}=\frac{1}{p}+\frac{1}{q}$ with $1<r<\infty$, then the pointwise product $fg$ belongs to $L^{r,\infty}$ and satisfies
\begin{align}\label{eqn:Holderineq}
\|fg\|_{r,\infty} \le C_{p,q} \|f\|_{p,\infty} \|g\|_{q,\infty},
\end{align}
where $C_{p,q}>0$ is a constant depending only on $p$ and $q$.
\end{description}
}
\end{lemma}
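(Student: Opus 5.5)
The statement is the weak Young and weak H\"older inequalities in Marcinkiewicz spaces. I will work with the norm \cref{eqn:weakLpnorm}. The key device is the dual characterization
\[
\|f\|_{p,\infty}\simeq \sup_{t>0} t^{1/p} f^{**}(t),\qquad f^{**}(t)=\frac1t\sup_{|E|=t}\int_E|f|,
\]
together with its equivalence, for $1<p<\infty$, with the quasi-norm $\sup_t t^{1/p}f^*(t)$. Here $f^*$ is the decreasing rearrangement, and the equivalence holds because $f^*\le f^{**}$ and, conversely, $t^{1/p}f^{**}(t)\le \frac{p}{p-1}\sup_s s^{1/p}f^*(s)$.

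\textbf{Weak H\"older.} I will use the rearrangement inequality $(fg)^*(t)\le f^*(t/2)g^*(t/2)$. This gives
\[
t^{1/r}(fg)^*(t)\le 2^{1/r}\,\bigl((t/2)^{1/p}f^*(t/2)\bigr)\bigl((t/2)^{1/q}g^*(t/2)\bigr)\lesssim \|f\|_{p,\infty}\|g\|_{q,\infty}.
\]
Since $r>1$, the norm equivalence above converts this bound on the quasi-norm back into \cref{eqn:Holderineq}.

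\textbf{Weak Young.} I will argue by duality against sets. Fix a Borel set $E$ with $|E|<\infty$; then $\int_E|f*g|\le\int |f|\,(|\tilde g|*\mathds 1_E)$, where $\tilde g(x)=g(-x)$. The aim is the bound
\[
\|\,|\tilde g|*\mathds 1_E\|_{p',\infty}\lesssim \|g\|_{q,\infty}|E|^{1-1/r},
\]
which is the weak Young estimate with one factor an indicator. Granting it, I pair with $f\in L^{p,\infty}$ using the Lorentz duality $L^{p,\infty}\times L^{p',1}\to L^1$. That step needs an $L^{p',1}$ bound rather than $L^{p',\infty}$, so I will get it from real interpolation.

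To set up the interpolation, consider the sublinear map $h\mapsto |\tilde g|*h$. Split $g=g\mathds 1_{|g|>\lambda}+g\mathds 1_{|g|\le\lambda}$. The large part lies in $L^{1}$ with norm $\lesssim \lambda^{1-q}\|g\|_{q,\infty}^q$, and the small part lies in $L^{s}$ for each $s>q$. Ordinary Young's inequality applied to each piece, with $\lambda$ optimized, yields weak-type estimates on two exponent pairs bracketing $(r',p')$, and Marcinkiewicz interpolation then gives strong type, in particular $L^{r',1}\to L^{p',1}$. Rather than use the indicator directly, I can instead bound $\int_E|f*g|=\sup|\langle f*g,h\rangle|$ over $|h|\le\mathds 1_E$ and exploit that $\|\mathds 1_E\|_{r',1}\simeq|E|^{1/r'}$. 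This finally gives $\int_E|f*g|\lesssim\|f\|_{p,\infty}\|g\|_{q,\infty}|E|^{1-1/r}$, which is \cref{eqn:Youngineq}.

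\textbf{Main obstacle.} The main difficulty is the bookkeeping in the Young case, namely ensuring that the interpolation endpoints stay strictly inside $(1,\infty)$ so that Marcinkiewicz applies. This is exactly where the hypotheses $1<p,q,r<\infty$ are used. The H\"older case is routine.
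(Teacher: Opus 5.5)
Your proof is correct, but it does not follow the paper. The paper gives no argument of its own and simply refers to \cite[Lemma 2.2]{BorchersMiyakawa92}; the standard proof of such statements goes through rearrangements.

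Your weak H\"older step is exactly that standard argument. For weak Young, the rearrangement route is a one-liner via O'Neil's inequality
\[
(f\ast g)^{**}(t)\le t\,f^{**}(t)\,g^{**}(t)+\int_t^\infty f^*(s)\,g^*(s)\,\mathrm ds .
\]
The norm \cref{eqn:weakLpnorm} is exactly $\sup_{t>0}t^{1/p}f^{**}(t)$, not merely equivalent to it, and $f^*\le f^{**}$. So the hypothesis $\frac1p+\frac1q-1=\frac1r>0$ immediately gives $t^{1/r}(f\ast g)^{**}(t)\le(1+r)\|f\|_{p,\infty}\|g\|_{q,\infty}$, with an explicit constant.

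You instead dualize against $\mathds 1_E$ and prove $\|\,|\tilde g|\ast h\|_{p',1}\lesssim\|g\|_{q,\infty}\|h\|_{r',1}$ by splitting $g$ at height $\lambda$ and interpolating. The bookkeeping you flag as the main obstacle is fine. The splitting gives weak type $(a,b)$ with $\frac1b=\frac1a-\frac1{q'}$ for every $a\in[1,q')$. Moreover $r'\in(1,q')$ precisely because $r<\infty$ and $p>1$, so $r'$ can be bracketed by $a_0<r'<a_1$ in that range.

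Compared with O'Neil's route, yours has a cost and a gain. The cost is that you must invoke the Lorentz-space form of the off-diagonal Marcinkiewicz theorem; the classical $L^{r'}\to L^{p'}$ conclusion would not pair with $L^{p,\infty}$. The gain is that only ordinary Young and Chebyshev enter on the kernel side, and you get the stronger $L^{r',s}\to L^{p',s}$ bound for every $s$.

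Since you only ever apply the operator to $h=\mathds 1_E$, you can also skip the interpolation theorem. For $\phi=|\tilde g|\ast\mathds 1_E$ the two weak-type bounds give $\phi^*(s)\lesssim\|g\|_{q,\infty}\min_{i}|E|^{1/a_i}s^{-1/b_i}$. Integrate $s^{-1/p}\phi^*(s)$ over $(0,|E|)$ using $a_1>r'$, and over $(|E|,\infty)$ using $a_0<r'$. This yields $\int|f|\,\phi\lesssim\|f\|_{p,\infty}\|g\|_{q,\infty}|E|^{1/r'}$ directly.
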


\section{Linearized flow}
\label{sec:linear}

In this section, we consider the following linearized perturbed equations:
\begin{align}\label{eqn:LPE}
\begin{cases}
\pt \bw - \Delta\bw +  \bU^\bb\cdot \nabla\bw+\bw\cdot \nabla \bU^\bb+\nabla \pi = 0, 
&(\bx,t)\in \mathbb{R}^3\times(0,\infty),\\
\pt \bB - \Delta \bB + \bU^\bb \cdot \nabla \bB-\bB\cdot \nabla \bU^\bb =0,
& (\bx,t)\in \mathbb{R}^3\times(0,\infty),\\
\nabla \cdot \bw =0, 
& (\bx,t)\in \mathbb{R}^3\times(0,\infty),\\
\nabla \cdot \bB=0, & (\bx,t)\in \mathbb{R}^3\times(0,\infty),\\
\bw(\bx,0)=\bw_0(\bx), & \bx \in \mathbb{R}^3,\\
\bB(\bx,0)=\bB_0(\bx), & \bx \in \mathbb{R}^3.
\end{cases}
\end{align}
The \emph{Leray projector} $\mathbb P$ is given by $(\delta_{jk}+R_jR_k)_{1\le j,k\le n}$, where $R_j$ signifies the \emph{Riesz transform}. When $1<q<\infty$, this operator remains bounded on $L^q$ and acts as a projection onto $L^q_{\sigma}$, corresponding to the \textit{Helmholtz decomposition} $L^q = L^q_{\sigma} \oplus \{\nabla p : \ p \in L^q_{\loc}\}$ for $1<q<\infty$ (see, e.\,g., \cite[Chapter III]{Galdi11} or \cite[Section 1]{Tsai2018}).

Applying the Leray projector $\mathbb P$ to \cref{eqn:LPE} formally yields
\begin{align}\label{eqn:LPE2}
\begin{cases}
\p_t \bw + L_1 \bw =0, & t \in   (0,\infty), \\ \bw(0)=\bw_0,\\
\p_t \bB + L_2 \bB =0, & t \in (0,\infty), \\ \bB(0)=\bB_0,
\end{cases}   
\end{align}
where 
\begin{align}\label{eqn:linearoperator}
\begin{aligned}
L_1 \bw&\coloneqq \mathbb P \bigl[- \Delta\bw +  \bU^\bb\cdot \nabla\bw + \nabla\cdot  (\bw \otimes \bU^\bb)\bigr],\\
L_2 \bB&\coloneqq \mathbb P \bigl[- \Delta \bB + \bU^\bb \cdot \nabla \bB-\nabla\cdot (\bB \otimes \bU^\bb )\bigr].
\end{aligned}
\end{align}

Denote the adjoint linear operators by
\begin{align}\label{eqn:adjlinoperator}
\begin{aligned}
L_1^*\bv &=\mathbb P \bigl[-\Delta \bv-(\bU^\bb\cdot \nabla)\bv-\sum_{j=1}^3 U_j^\bb \nabla v_j\bigr],\\
L_2^*\bv &=\mathbb P \bigl[-\Delta \bv-(\bU^\bb\cdot \nabla)\bv+\sum_{j=1}^3 U_j^\bb \nabla v_j\bigr],
\end{aligned}
\end{align}
and the corresponding bilinear forms are
\begin{equation}
\begin{aligned}
\label{eqn:bilinearform}
a_1(\bw,\bv)&=\langle \nabla \bw , \nabla \bv \rangle 
    + \langle (\bU^\bb \cdot \nabla) \bw , \bv \rangle 
    +\langle (\bw \cdot \nabla) \bU^\bb, \bv \rangle,\\ 
a_2(\bB,\bv)&=\langle \nabla \bB,\nabla \bv \rangle 
    + \langle (\bU^\bb \cdot \nabla) \bB , \bv \rangle 
    -\langle (\bB \cdot \nabla) \bU^\bb, \bv \rangle.    
\end{aligned}
\end{equation}
Then
\[
a_1(\bw,\bv)=\langle L_1 \bw,\bv \rangle =\langle  \bw, L_1^*\bv \rangle,\quad 
a_2(\bB,\bv)=\langle L_2 \bB,\bv \rangle =\langle  \bB, L_2^*\bv\rangle.
\]

Let us recall a criterion due to Lions (see \cite[Proposition 1.2, Remarque 1.1]{Lions61}), which gives sufficient conditions for an operator to generate a holomorphic semigroup on a Hilbert space.

\begin{proposition}\label{prop:Lions}
Let $\mathcal{H}$ be a Hilbert space and let $\mathcal{V}\subset \mathcal{H}$ be a dense subspace. Assume that $\mathcal{V}$ is a Hilbert space with the inner product $(\cdot , \cdot )_{\mathcal{V}}$ and with the norm $\| \cdot \|_{\mathcal{V}}$ such that for a constant $C>0$ we have $ \|x\|_{\mathcal{H}}\leqslant C\|x\|_{\mathcal{V}}$ for all $x\in \mathcal{V}$. Let $a(x,y)$ be a bounded sesquilinear form on $\mathcal{V}$, which defines an operator $A: \mathcal{D}(A)\rightarrow \mathcal{H}$ as follows
\begin{equation*}
   \mathcal{D}(A)=\{ z\in \mathcal{V} :\ |a(z,v)|\leqslant C \| v\|_{\mathcal{H}}, v\in \mathcal{V}\}, \quad (Az,v)_{\mathcal{H}}=a(z,v).
\end{equation*}
Suppose that for some $\alpha >0$ and $\lambda_0 \in \mathbb R$ we have 
\begin{equation}\label{eqn:Lions}
  \alpha \| z\|_{\mathcal{V}}^2\leqslant \text{\rm Re} \ a(z,z)+\lambda_0\|z\|_{\mathcal{H}}^2.
\end{equation}
Then $-A$ is the infinitesimal generator of a strongly continuous semigroup of linear operators on $\mathcal{H}$ which is holomorphic in a sector $S_\ve\coloneqq \{s\in \mathbb C: \ |\text{\rm Arg}\ s|<\ve\}$ for some $\ve >0$.
\end{proposition}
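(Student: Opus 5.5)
The plan is to reduce to a coercive form by a shift, show that the shifted operator is sectorial with a resolvent estimate of order $|\lambda|^{-1}$ on a sector, and conclude by the standard characterization of generators of holomorphic semigroups. I identify $\mathcal H$ with a subspace of $\mathcal V^*$ via the continuous dense embedding $\mathcal V\subset\mathcal H$. I set
\[
b(x,y)\coloneqq a(x,y)+\lambda_0(x,y)_{\mathcal H}, \qquad B\coloneqq A+\lambda_0 .
\]
Then $\mathcal D(B)=\mathcal D(A)$. By \cref{eqn:Lions} we have $\operatorname{Re} b(z,z)\ge\alpha\|z\|_{\mathcal V}^2$, and boundedness gives $|b(x,y)|\le M\|x\|_{\mathcal V}\|y\|_{\mathcal V}$ for some $M$. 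Since $e^{-tA}=e^{\lambda_0 t}e^{-tB}$, it suffices to prove that $-B$ generates a holomorphic semigroup; the shift by $e^{\lambda_0 t}$ preserves both strong continuity and holomorphy in a sector.

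\textbf{Sectoriality.} For $z\in\mathcal V$ we have
\[
|\operatorname{Im} b(z,z)|\le M\|z\|_{\mathcal V}^2\le \frac{M}{\alpha}\operatorname{Re} b(z,z).
\]
Hence the numerical values $b(z,z)$ lie in the closed sector $\Sigma_\theta=\{|\operatorname{Arg}\zeta|\le\theta\}$ with $\tan\theta=M/\alpha<\infty$, so $\theta<\pi/2$.

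\textbf{Resolvent.} Fix $\lambda$ with $|\operatorname{Arg}\lambda|<\pi-\theta$, or $\lambda=0$.
\begin{itemize}
\item[(a)] The form $b(x,y)+\lambda(x,y)_{\mathcal H}$ is coercive on $\mathcal V$. To see this, rotate: choose $\omega$ with $|\omega|=1$ so that $\operatorname{Re}\big(\omega(\zeta+\lambda)\big)\ge c_\lambda(|\zeta|+|\lambda|)$ for all $\zeta\in\Sigma_\theta$. Applying this with $\zeta=b(z,z)/\|z\|_{\mathcal H}^2$ gives coercivity.
\item[(b)] By Lax--Milgram, for every $f\in\mathcal H$ there is a unique $z\in\mathcal V$ with $b(z,v)+\lambda(z,v)_{\mathcal H}=(f,v)_{\mathcal H}$ for all $v\in\mathcal V$. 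This says exactly that $z\in\mathcal D(B)$ and $(B+\lambda)z=f$, so $B+\lambda$ is bijective.
\item[(c)] Testing with $v=z$ and using the same rotation gives $c|\lambda|\,\|z\|_{\mathcal H}^2\le|(f,z)|$, hence $\|(B+\lambda)^{-1}\|\le C_\varepsilon/|\lambda|$ on each smaller sector $|\operatorname{Arg}\lambda|\le\pi-\theta-\varepsilon$.
\end{itemize}
Closedness of $B$ follows from bijectivity of $B+\lambda$ with bounded inverse.

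\textbf{Density of $\mathcal D(A)$.} Suppose $h\in\mathcal H$ is orthogonal to $\mathcal D(B)$. Solve the adjoint problem by Lax--Milgram for the adjoint form $\overline{b(v,x)}$: find $w\in\mathcal V$ with $\overline{b(v,w)}=(v,h)_{\mathcal H}$ for all $v\in\mathcal V$, i.e.\ $b(v,w)=(h,v)_{\mathcal H}$. Taking $v=B^{-1}w$ gives
\[
\|w\|_{\mathcal H}^2=(BB^{-1}w,w)=b(B^{-1}w,w)=(h,B^{-1}w)=0 .
\]
So $w=0$, whence $(v,h)_{\mathcal H}=0$ for all $v\in\mathcal V$, and $h=0$ by density of $\mathcal V$.

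\textbf{Conclusion.} $-B$ is densely defined and closed, its resolvent set contains a sector $|\operatorname{Arg}\lambda|<\pi/2+\varepsilon'$ (with $\varepsilon'=\pi/2-\theta-\varepsilon>0$), and the bound $\|(\lambda+B)^{-1}\|\le C/|\lambda|$ holds there. The standard generation theorem (Dunford integral $e^{-tB}=\frac1{2\pi i}\int_\Gamma e^{\lambda t}(\lambda+B)^{-1}\,d\lambda$) then gives a strongly continuous semigroup, holomorphic in $S_\varepsilon$ for some $\varepsilon>0$. Undoing the shift yields the claim for $-A$.

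The main obstacle I expect is the uniform constant in the resolvent estimate, that is, choosing the rotation $\omega$ uniformly over the sector. I would handle this by the elementary geometric fact that the distance from $-\lambda$ to $\Sigma_\theta$ is at least $|\lambda|\sin(\varepsilon)$ in the smaller sector. Everything else is routine.
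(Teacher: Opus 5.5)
Your proof is correct. The paper, however, gives no proof of this proposition: it quotes it from Lions' monograph (Proposition~1.2 and Remarque~1.1 there) and only verifies its hypotheses, via the Hardy inequality, in the proof of Theorem~\ref{thm:analyticsemigroup}.

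What you supply is the standard form-method proof of such a criterion:
\begin{itemize}
\item shift to the coercive form $b=a+\lambda_0(\cdot,\cdot)_{\mathcal H}$;
\item confine the numerical range to $\Sigma_\theta$ with $\tan\theta=M/\alpha$;
\item get bijectivity of $\lambda+B$ from Lax--Milgram;
\item obtain the $C/|\lambda|$ resolvent bound on a sector of half-opening larger than $\pi/2$ from $|b(z,z)+\lambda\|z\|_{\mathcal H}^2|\ge\|z\|_{\mathcal H}^2\operatorname{dist}(-\lambda,\Sigma_\theta)$;
\item prove density of $\mathcal D(A)$ through an adjoint Lax--Milgram problem;
\item conclude with the usual generation theorem for holomorphic semigroups.
\end{itemize}

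The citation buys brevity. Your route buys self-containedness and makes explicit what the black box hides:
\begin{itemize}
\item The holomorphy angle can be any $\varepsilon<\frac{\pi}{2}-\arctan(M/\alpha)$. This makes the dependence $\varepsilon=\varepsilon(|\bb|)$ in Theorem~\ref{thm:analyticsemigroup} explicit.
\item Since $\operatorname{Re} b(z,z)\ge 0$ and $\lambda+B$ is onto for $\lambda>0$, the operator $B$ is m-accretive, so $\|e^{-tA}\|\le e^{\lambda_0 t}$ comes for free. In the application one already has $\operatorname{Re} a_i(z,z)\ge 0$, which yields the contraction bound \eqref{eqn:corollary2} directly.
\end{itemize}

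One small slip is in the density step. With forms linear in the first slot and antilinear in the second, as forced by $(Az,v)_{\mathcal H}=a(z,v)$, your adjoint problem $\overline{b(v,w)}=(v,h)_{\mathcal H}$ equates an antilinear functional of $v$ with a linear one. It should read $b(v,w)=(v,h)_{\mathcal H}$ for all $v\in\mathcal V$. Then $\|w\|_{\mathcal H}^2=b(B^{-1}w,w)=(B^{-1}w,h)_{\mathcal H}=0$, so the conclusion is unaffected.
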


The next theorem follows directly from \cref{prop:Lions} and the Hardy inequality \eqref{eqn:Hardy}. It establishes that for sufficiently small $|\bb|$, the operators $L_i,\,L_i^*$ $(i=1,2)$ are the infinitesimal generators of analytic semigroups of linear operators on $L^2_{\sigma}(\mathbb R^3)$.

\begin{theorem}\label{thm:analyticsemigroup}
Let $\mathbf0\neq \bb \in \R^3$ satisfy $|\bb|\le \min\{\ve_0,\frac{1}{6C_0}\}$, where $C_0$ is defined in \cref{eqn:LandauEst}. Then the operators $L_i$ and $L_i^*$ (for $i=1,2$)  defined in \cref{eqn:linearoperator} and \cref{eqn:adjlinoperator} are the infinitesimal generators of analytic semigroups of linear operators on $L^2_{\sigma}(\mathbb R^3)$ which are holomorphic in a sector $ \{s\in \mathbb C: \ |\text{\rm Arg}\ s|<\ve\}$ for some $\ve=\ve(|\bb|) >0$..
\end{theorem}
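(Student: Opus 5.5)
We apply \cref{prop:Lions} with $\mathcal H = L^2_\sigma(\R^3)$ and $\mathcal V = H^1_\sigma(\R^3)$, the closure of $C^\infty_{0,\sigma}$ in the $H^1$-norm $\|z\|_{\mathcal V}^2=\|z\|_2^2+\|\nabla z\|_2^2$. Then $\mathcal V$ is dense in $\mathcal H$ and $\|z\|_{\mathcal H}\le\|z\|_{\mathcal V}$. The forms are $a_1,a_2$ from \cref{eqn:bilinearform}, and for the adjoints we take
\[
a_1^*(\bw,\bv)=a_1(\bv,\bw),\qquad a_2^*(\bw,\bv)=a_2(\bv,\bw).
\]
The operators associated with these forms in the sense of \cref{prop:Lions} are $L_i$ and $L_i^*$; this is the identity $a_i(\bw,\bv)=\langle L_i\bw,\bv\rangle=\langle \bw,L_i^*\bv\rangle$ recorded above, where the Leray projector $\mathbb P$ appears because we test against divergence-free $\bv$. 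Since everything is real, Re$\,a=a$.

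\emph{Step 1: boundedness on $\mathcal V$.} We use \cref{eqn:LandauEst} together with Cauchy--Schwarz and the Hardy inequality \cref{eqn:Hardy}. For the transport term,
\[
|\langle (\bU^\bb\cdot\nabla)\bw,\bv\rangle| \le C_0|\bb|\,\bigl\| |\bx|^{-1}\bv\bigr\|_2\,\|\nabla\bw\|_2 \le 2C_0|\bb|\,\|\nabla \bv\|_2\|\nabla\bw\|_2 .
\]
For the term involving $\nabla\bU^\bb$ we first integrate by parts, using $\nabla\cdot\bw=0$:
\[
\langle(\bw\cdot\nabla)\bU^\bb,\bv\rangle=-\langle(\bw\cdot\nabla)\bv,\bU^\bb\rangle .
\]
The right-hand side is bounded by $2C_0|\bb|\,\|\nabla\bw\|_2\|\nabla\bv\|_2$ in the same way. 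Alternatively, one can keep $|\nabla \bU^\bb|\le C_0|\bb||\bx|^{-2}$ and apply Hardy to both factors, which gives the constant $4C_0|\bb|$. The same bounds hold for the sign-changed term in $a_2$ and for the adjoint forms. Hence each of $a_i,a_i^*$ is bounded on $\mathcal V$, with $|a(\bw,\bv)|\le (1+4C_0|\bb|)\|\nabla\bw\|_2\|\nabla\bv\|_2$.

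\emph{Step 2: coercivity \cref{eqn:Lions}.} Take $\bv=z$. The transport term vanishes, because
\[
\langle(\bU^\bb\cdot\nabla)z,z\rangle=\tfrac12\int \bU^\bb\cdot\nabla|z|^2=0
\]
by $\nabla\cdot\bU^\bb=0$. The identity holds in distributions on $\R^3$. One can justify it by density of $C^\infty_{0,\sigma}$ together with the bound from Step 1, noting that $|\bU^\bb|$ is locally integrable, so no boundary term arises at the origin. The remaining term satisfies
\[
|\langle(z\cdot\nabla)\bU^\bb,z\rangle|\le C_0|\bb|\int\frac{|z|^2}{|\bx|^2}\le 4C_0|\bb|\,\|\nabla z\|_2^2 .
\]
Therefore
\[
a_i(z,z)\ge (1-4C_0|\bb|)\|\nabla z\|_2^2\ge \tfrac13\|\nabla z\|_2^2,
\]
using $|\bb|\le \frac1{6C_0}$. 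Adding $\lambda_0\|z\|_2^2$ with $\lambda_0=\frac13$ yields \cref{eqn:Lions} with $\alpha=\frac13$. Since $a_i^*(z,z)=a_i(z,z)$, the same estimate holds for the adjoint forms.

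\emph{Step 3: conclusion.} \cref{prop:Lions} then shows that $-L_i$ and $-L_i^*$ generate strongly continuous semigroups that are holomorphic in a sector $S_\ve$. The angle $\ve$ depends only on the ratio between the boundedness constant of Step 1 and the coercivity constants of Step 2, so it depends on $|\bb|$. We read the statement as saying that $-L_i$ and $-L_i^*$ are the generators, which is the sign convention of \cref{eqn:LPE2}.

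The main obstacle is the identification of the operator associated with each form with the expressions \cref{eqn:linearoperator}--\cref{eqn:adjlinoperator}, together with the justification of the integrations by parts near the singularity at the origin. Both are handled by density of $C^\infty_{0,\sigma}$ in $H^1_\sigma$ and the uniform bounds of Step 1, which make all the trilinear expressions continuous on $\mathcal V$.
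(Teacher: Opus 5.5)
Your proof is correct and follows the paper's argument: you apply Lions' criterion with $\mathcal H=L^2_\sigma$ and $\mathcal V=H^1_\sigma$, and obtain boundedness and coercivity of $a_i$ (and of the adjoint forms) from \cref{eqn:LandauEst} together with Hardy's inequality \cref{eqn:Hardy}. Your use of the cancellation $\langle(\bU^\bb\cdot\nabla)z,z\rangle=0$ gives the coercivity constant $1-4C_0|\bb|\ge\frac13$. This is sharper than the paper's $1-6C_0|\bb|$, and unlike the paper's bound it still gives $\alpha>0$ at the endpoint $|\bb|=\frac{1}{6C_0}$ that the statement allows.
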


\begin{proof}
The argument follows that of \cite[Theorem 4.2]{KarchPilarczyk2011}; we outline the details for completeness.
Take $\mathcal{H}=L^2_\sigma (\R^3)$ and $\mathcal{V}=H^1_\sigma (\R^3)$ in \cref{prop:Lions}. Then \cref{eqn:LandauEst} and Hardy's inequality \cref{eqn:Hardy} yield that
\[
|\langle (\bU^\bb \cdot \nabla) \bw , \bv \rangle| \le C_0  |\bb| \| \nabla \bw \|_2 \left\| \frac{\bv}{|\cdot|} \right\|_2
\le 2 C_0  |\bb| \| \nabla \bw \|_2 \| \nabla \bv \|_2,
\]
and 
\[
|\langle (\bw \cdot \nabla) \bU^\bb, \bv \rangle| \le C_0 |\bb| \left\| \frac{\bw}{|\cdot|} \right\|_2 
\left\| \frac{\bv}{|\cdot|} \right\|_2 
\le 4C_0 |\bb|  \| \nabla \bw \|_2 \| \nabla \bv \|_2,
\]
provided $|\bb| \le \ve_0$.
Consequently,
\begin{equation}
\begin{aligned}\label{eqn:bddcoercive}
a_1(\bw,\bv) &\le (1+6C_0|\bb|) \|\nabla \bw \|_2 \|\nabla \bv \|_2,\\ 
a_1(\bw,\bw) &\ge (1-6C_0|\bb|) \|\nabla \bw \|_2^2.
\end{aligned}   
\end{equation}
Hence \cref{eqn:Lions} is satisfied provided $|\bb| \le \min \{\ve_0,\frac{1}{6C_0}\}$, which gives the desired conclusion for $L_1$ and $L_1^*$. The same argument applies to $L_2$ and $L_2^*$.
\end{proof}

We now state a corollary that collects standard properties of generators of analytic semigroups.
Although stated for $L_i$ ($i=1,2$), they also hold for the adjoint operators $L_i^*$.

\begin{corollary}
Under the assumptions of \cref{thm:analyticsemigroup}, the following statements hold.
\begin{enumerate}[label={(\roman*)}]
\item  For $i=1,2$, we have 
\begin{align}\label{eqn:corollary1}
(1-6C_0|\bb|)^{\frac{1}{2}} \| \nabla \bw \|_2 \le 
\| L_i^{\frac{1}{2}} \bw \|_2 \le (1+6C_0|\bb|)^{\frac{1}{2}} \| \nabla \bw \|_2  
\end{align}
for all $\bw\in \dot H^1_\sigma(\R^3)$;

\item For $i=1,2$, we have 
\begin{align}\label{eqn:corollary2}
\|e^{-tL_i}\bw_0\|_2 \le \| \bw_0\|_2  
\end{align}
for all $\bw_0\in L^2_\sigma(\R^3)$ and $t>0$.
\end{enumerate}
\end{corollary}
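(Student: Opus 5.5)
The two statements are of different nature, so I will treat them separately: (ii) is an energy identity, while (i) is a square-root (Kato-type) comparison.

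\emph{Part (ii).} Take $\bw_0\in\mathcal D(L_i)$ first and set $\bw(t)=e^{-tL_i}\bw_0$. By analyticity from \cref{thm:analyticsemigroup}, $\bw\in C^1((0,\infty);L^2_\sigma)$ with $\bw(t)\in\mathcal D(L_i)\subset H^1_\sigma$. Hence
\[
\frac{\mathrm d}{\mathrm dt}\|\bw(t)\|_2^2=-2\,\mathrm{Re}\,\langle L_i\bw,\bw\rangle=-2\,a_i(\bw,\bw)\le -2(1-6C_0|\bb|)\|\nabla\bw\|_2^2\le 0,
\]
where the inequality is the coercivity bound \cref{eqn:bddcoercive}. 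For $L_2$ the sign change in front of $\langle(\bB\cdot\nabla)\bU^\bb,\bB\rangle$ is harmless, since that term is bounded in absolute value by the same Hardy estimate. Integrating in time gives $\|e^{-tL_i}\bw_0\|_2\le\|\bw_0\|_2$. Density of $\mathcal D(L_i)$ and boundedness of $e^{-tL_i}$ then extend the bound to all $\bw_0\in L^2_\sigma$. The adjoints $L_i^*$ are handled identically: their forms are $\overline{a_i(\bv,\bw)}$, which have the same real part on the diagonal.

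\emph{Part (i).} I would first isolate the structure of the form. Since $\nabla\cdot\bU^\bb=0$, we have $\langle(\bU^\bb\cdot\nabla)\bw,\bw\rangle=0$, so the transport term is skew. Thus $a_i=s_i+k_i$, where
\begin{itemize}
\item $s_i(\bw,\bv)=\langle\nabla\bw,\nabla\bv\rangle\pm\langle\mathrm{Sym}(\nabla\bU^\bb)\bw,\bv\rangle$ is symmetric,
\item $k_i$ is the skew remainder, containing $\langle(\bU^\bb\cdot\nabla)\bw,\bv\rangle$ and the antisymmetric part of $\nabla\bU^\bb$.
\end{itemize}
By Hardy's inequality \cref{eqn:Hardy} and \cref{eqn:LandauEst}, as in the proof of \cref{thm:analyticsemigroup}:
\begin{itemize}
\item $s_i$ is equivalent to $\|\nabla\cdot\|_2^2$, with constants $1\pm4C_0|\bb|$;
\item $|k_i(\bw,\bv)|\le 6C_0|\bb|\,\|\nabla\bw\|_2\|\nabla\bv\|_2$, which is small relative to $s_i$.
\end{itemize}
So $L_i$ is a regularly accretive operator whose numerical range lies in a narrow sector around $\R_+$, with half-angle of order $\arctan(6C_0|\bb|/(1-6C_0|\bb|))$.

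I would then invoke Kato's theorem on square roots of regularly accretive operators whose imaginary part is dominated by the real part with small constant. It gives $\mathcal D(L_i^{1/2})=\mathcal D(L_i^{*1/2})=H^1_\sigma$, together with two-sided bounds
\[
\|L_i^{1/2}\bw\|_2^2\simeq \mathrm{Re}\,a_i(\bw,\bw).
\]
The comparison constants tend to $1$ as the sector angle shrinks. Combining this with \cref{eqn:bddcoercive}, i.e.
\[
(1-6C_0|\bb|)\|\nabla\bw\|_2^2\le a_i(\bw,\bw)\le(1+6C_0|\bb|)\|\nabla\bw\|_2^2,
\]
yields the estimate \cref{eqn:corollary1}. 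A concrete route to the upper bound is
\[
\|L_i^{1/2}\bw\|_2^2=\langle L_i\bw,\,(L_i^{1/2})^{*}(L_i^{1/2})^{-*}\ldots\rangle,
\]
which is awkward; instead I would write $\|L_i^{1/2}\bw\|_2=\sup_{\bv}|a_i(\bw,\bv)|/\|L_i^{*1/2}\bv\|_2$ and use duality together with the bound for $L_i^*$. The lower bound then follows from the upper bound for the adjoint by the same duality, which is available since $\mathcal D(L_i^{1/2})=\mathcal D(L_i^{*1/2})$. Finally, density of $\mathcal D(L_i)$ extends the estimate to $\dot H^1_\sigma$.

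\emph{Main obstacle.} Since $L_i$ is not self-adjoint, $\|L_i^{1/2}\bw\|_2^2$ is not literally $a_i(\bw,\bw)$. Obtaining the sharp constants $(1\pm6C_0|\bb|)^{1/2}$, rather than constants merely comparable to them, requires this Kato/Lions square-root argument. I expect this step to be the main difficulty; in the worst case one gets the equivalence with constants depending only on $C_0|\bb|$, which is all the later decay arguments need.
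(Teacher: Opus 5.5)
\textbf{Part (ii).} This is the paper's argument: the energy identity, $\langle(\bU^\bb\cdot\nabla)\bw,\bw\rangle=0$, and Hardy. The paper keeps the sharper factor $1-4C_0|\bb|$; you use $1-6C_0|\bb|$, which works equally well.

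\textbf{Part (i): the abstract square-root step fails.} The paper simply asserts $\|L_i^{1/2}\bw\|_2^2=a_i(\bw,\bw)$ ``by definition of the square root''. You are right that this needs self-adjointness, and $L_i\neq L_i^*$. But your substitute has a genuine gap at exactly that point. There is no abstract Kato theorem that turns a thin numerical-range sector into $\mathcal D(L^{1/2})=\mathcal D(a)$ with $\|L^{1/2}\bw\|_2^2\simeq\operatorname{Re}a(\bw,\bw)$, let alone with constants tending to $1$.
\begin{itemize}
\item Kato's abstract results cover $\mathcal D(L^{\alpha})=\mathcal D(L^{*\alpha})$ for $\alpha<\frac12$ only. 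McIntosh (1972) showed that $\alpha=\frac12$ can fail for regularly accretive operators.
\item A small skew part does not help. Take $H=\operatorname{diag}(h_j)$ on $\ell^2$ and $A_z=H^{1/2}(I+zB)H^{1/2}$. The first-order term in $z$ of $A_z^{1/2}H^{-1/2}$ is the matrix $\bigl(B_{jk}\sqrt{h_j}/(\sqrt{h_j}+\sqrt{h_k})\bigr)_{j,k}$. For $h_j=4^j$ this is the triangular truncation of $B$ plus a bounded Toeplitz Schur multiplier, hence unbounded for suitable bounded $B$. Cauchy's estimate then rules out any bound $\|A_z^{1/2}u\|\le C\|H^{1/2}u\|$ uniform in small $|z|$. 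This is why Kato's problem for $-\nabla\cdot(A\nabla)$ with $A$ close to the identity needed genuine harmonic analysis.
\item The exact constants in \cref{eqn:corollary1} cannot follow from \cref{eqn:bddcoercive} alone, even in finite dimension. $M=\begin{pmatrix}1&1\\0&1\end{pmatrix}$ on $\R^2$ satisfies $\langle Mx,x\rangle\ge\frac12|x|^2$ and $|\langle Mx,y\rangle|\le\frac{1+\sqrt5}{2}|x||y|$. Yet $M^{1/2}=\begin{pmatrix}1&1/2\\0&1\end{pmatrix}$ has $\|M^{1/2}\|^2=\frac{9+\sqrt{17}}{8}>\frac{1+\sqrt5}{2}$.
\end{itemize}

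\textbf{The duality step is circular.} The formula $\|L_i^{1/2}\bw\|_2=\sup_{\bv}|a_i(\bw,\bv)|/\|L_i^{*1/2}\bv\|_2$ needs a lower bound on $\|L_i^{*1/2}\bv\|_2$. You propose to obtain it from an upper bound for $L_i^{1/2}$, which is what you are trying to prove. Coercivity alone only gives the product bound $\operatorname{Re}a_i(\bw,\bw)\le\|L_i^{1/2}\bw\|_2\|L_i^{*1/2}\bw\|_2$. Even with one side granted, the argument would yield $(1+6C_0|\bb|)(1-6C_0|\bb|)^{-1/2}$ rather than $(1+6C_0|\bb|)^{1/2}$.

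\textbf{What a non-circular proof needs.} It must use more than form-smallness. Set $\kappa_i(\bw,\bv):=a_i(\bw,\bv)-\langle\nabla\bw,\nabla\bv\rangle$. For $0<\epsilon<\frac12$, fractional Hardy inequalities give $|\kappa_i(\bw,\bv)|\lesssim|\bb|\,\|\bw\|_{\dot H^{1+\epsilon}}\|\bv\|_{\dot H^{1-\epsilon}}$, and the same with $\bw,\bv$ interchanged after integrating the transport term by parts. Insert these into the resolvent formula for $L_i^{1/2}-A^{1/2}$, with $A$ the Stokes operator, and use quadratic estimates. This gives $\|L_i^{1/2}\bw\|_2,\ \|L_i^{*1/2}\bw\|_2\le(1+C|\bb|)\|\nabla\bw\|_2$, and the product bound then supplies the lower bound. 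The resulting constants are $1+O(|\bb|)$, not the ones stated.

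\textbf{The later argument needs less than (i).} The only later use of (i) is the admissibility of $\boldsymbol{\varphi}(\tau)=e^{-(s-\tau)L_i^*}\boldsymbol{\psi}$ in the proof of \cref{thm:stability}. That needs no square-root theory. Your energy argument from (ii), applied to $L_i^*$, gives $2(1-4C_0|\bb|)\int_0^s\|\nabla e^{-\sigma L_i^*}\boldsymbol{\psi}\|_2^2\,\mathrm d\sigma\le\|\boldsymbol{\psi}\|_2^2$. Hence $\nabla\boldsymbol{\varphi}\in L^2(0,s;L^2)$ and $\partial_\tau\boldsymbol{\varphi}\in L^2(0,s;(H^1_\sigma)^*)$.
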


\begin{proof}
We prove the statements for $L_1$; the case of $L_2$ is analogous.
\begin{enumerate}[label={(\roman*)}]
\item By definition of a square root of nonnegative operators, $\|L_1^{\frac{1}{2}} \bw \|_2^2 = a_1(\bw, \bw)$. 
Hence the desired inequality follows from \cref{eqn:bddcoercive}. The same argument applies to $L_2$
\item The proof follows the same lines as for the Navier--Stokes case in \cite[Corollary 4.4]{KarchPilarczyk2011}. Multiplying equation \cref{eqn:LPE}$_1$ by $\bw$ and integrating over $\R^3$, we obtain 
\begin{equation*}
\frac{1}{2}\frac{\mathrm d}{\mathrm d t}\| \bw(t) \|^2_2+\| \nabla \bw(t)\|_2^2 
+\langle (\bw \cdot \nabla )\bU^\bb, \bw \rangle
=0.
\end{equation*}
Here we used $\langle (\bU^\bb \cdot \nabla )\bw, \bw \rangle=0$ because of $\operatorname{div}\, \bU^\bb=0$.
Therefore, Hardy's inequality \cref{eqn:Hardy} yields
\begin{equation*} 
   \frac{1}{2}\frac{\mathrm d}{\mathrm d t}\| \bw(t)\|^2_2 + \big( 1- 4C_0 |\bb| \big)\| \nabla \bw(t)\|_2^2\le 0,
\end{equation*}
where $1- 4C_0 |\bb|\ge 0$ because $|\bb|\le \frac{1}{6C_0}$. 
Integrating the above inequality from $0$ to $t$ gives the desired estimate.
\end{enumerate}
\end{proof}

The decay estimates below for the semigroups $e^{-tL_i}$ were first established by Hishida and Schonbek in  \cite{HishidaSchonbek16}, and play a key role in the proofs of \crefrange{thm:stability}{thm:decay}. Indeed, the linearization decouples the system \cref{eqn:LPE2}, enabling us to argue similarly to  \cite{HishidaSchonbek16}. 
Their proof is based on the Duhamel formulation of the linearized equation. Treating the term $\bu \otimes \bU^\bb + \bU^\bb \otimes \bu$ as a perturbation and applying the key estimate by Yamazaki \cite[Corollary 2.3(2)]{MR1777114} in Lorentz spaces, they set up a contraction mapping in an appropriately weighted $L^p$ space, under the condition that the basic flow $\bU^\bb$ is sufficiently small in the critical space $L^{n,\infty}$.

\begin{proposition}\label{prop:decay} 
Let $L_i$, with $i =1,\,2$, be the linear operators defined in \cref{eqn:linearoperator}. Then the following propositions hold.
\begin{propenum}
\item \label{it:lmdecay1} For any $1 \le q \le 2$, there exists a constant $\delta_1=\delta_1(q)>0$ (independent of $i$) such that if $0<|\bb|\le \delta_1$, then the semigroup $e^{-tL_i}$ satisfies
\begin{align}\label{eqn:decay1}
\|e^{-tL_i} \Bf \|_2 \le Ct^{-\frac{3}{2}(\frac{1}{q}-\frac{1}{2})} \|\Bf\|_q,\quad \text{for all } t>0, \ \Bf \in  L^q_{\sigma}(\mathbb R^3). 
\end{align}

\item \label{it:lmdecay2}
For any $\frac{6}{5} < q \le 2$, there exists $\delta_2=\delta_2(q)>0$ (independent of $i$) such that if $0<|\bb|\le \delta_2$, then the semigroup $e^{-tL_i}$ satisfies
\begin{align}\label{eqn:decay2}
\|e^{-tL_i}\mathbb P \nabla\cdot \BF\|_2 
\le Ct^{ -\frac{3}{2}(\frac{1}{q}-\frac{1}{2}) -\frac{1}{2} } \| \BF \|_q,\quad \text{for all } t>0, \  \BF\in L^q(\mathbb R^3)^{3\times 3}. 
\end{align}
\end{propenum}
\end{proposition}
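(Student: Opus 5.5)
The plan is to follow the perturbative strategy of Hishida and Schonbek: view $e^{-tL_i}$ as a perturbation of the heat semigroup $e^{t\Delta}$ on $L^2_\sigma$, and close a fixed-point argument in a time-weighted space. Write $\bv(t)=e^{-tL_i}\Bf$. Then $\bv$ solves the Duhamel equation
\begin{align*}
\bv(t)=e^{t\Delta}\Bf-\int_0^t e^{(t-\tau)\Delta}\mathbb P\nabla\cdot\bigl(\bU^\bb\otimes\bv \pm \bv\otimes\bU^\bb\bigr)(\tau)\,\mathrm d\tau,
\end{align*}
where the sign is $+$ for $i=1$ and $-$ for $i=2$. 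I use $\nabla\cdot\bU^\bb=0$ to write $\bU^\bb\cdot\nabla\bv=\nabla\cdot(\bU^\bb\otimes\bv)$. The sign plays no role in the estimates, so a single argument gives a $\delta_1(q)$ independent of $i$.

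For part (i), fix an auxiliary exponent $p\in(1,\infty)$, or $p$ near $q$ with $1<p$. Set
\[
N(\bv)=\sup_{t>0}t^{\frac32(\frac1q-\frac1p)}\|\bv(t)\|_{p,\infty}.
\]
First, the heat part satisfies $N(e^{t\Delta}\Bf)\le C\|\Bf\|_q$; for $q=1$ one can only use the weak norm, which is why Lorentz spaces are natural. Second, \eqref{eqn:LandauEst} gives $\|\bU^\bb\|_{3,\infty}\le C|\bb|$. By weak H\"older \eqref{eqn:Holderineq}, $\|\bU^\bb\otimes\bv\|_{r,\infty}\le C|\bb|\,\|\bv\|_{p,\infty}$ with $\frac1r=\frac1p+\frac13$.

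The key step is Yamazaki's estimate \cite[Corollary 2.3(2)]{MR1777114}. It bounds the Duhamel term
\[
\int_0^t e^{(t-\tau)\Delta}\mathbb P\nabla\cdot G\,\mathrm d\tau
\]
in $L^{p,\infty}$ with weight $t^{\alpha}$ by $\sup_\tau\tau^{\alpha}\|G(\tau)\|_{r,\infty}$, whenever $\frac1r=\frac1p+\frac13$ and $\alpha$ lies in the admissible range. Naive integration fails here because the kernel behaves like $(t-\tau)^{-1}$, the borderline critical case; this is the main obstacle, and Yamazaki's real-interpolation/weak-$L^1$-in-time argument (Young's inequality in weak spaces, \eqref{eqn:Youngineq}) is exactly what resolves it.

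With these bounds the Duhamel map is a contraction in the weighted space once $C|\bb|<\frac12$. This gives $N(\bv)\le C\|\Bf\|_q$, and the solution coincides with $e^{-tL_i}\Bf$ by uniqueness in $L^2$. To reach the $L^2$ norm, I would choose $p$ and interpolate, or run the same scheme with $p=2$ and then use the semigroup property together with \eqref{eqn:corollary2}: write $e^{-tL_i}=e^{-\frac t2L_i}e^{-\frac t2L_i}$, apply an $L^{p,\infty}\to L^2$ bound on one half, and the contraction bound on the other half. The restriction on $|\bb|$ depends on the constants in Yamazaki's estimate, hence on $q$, which explains why $\delta_1=\delta_1(q)$.

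For part (ii), I would use duality:
\[
\langle e^{-tL_i}\mathbb P\nabla\cdot\BF,\bv\rangle=-\langle\BF,\nabla e^{-tL_i^*}\bv\rangle.
\]
So it suffices to prove $\|\nabla e^{-tL_i^*}\bv\|_{q'}\le Ct^{-\frac32(\frac12-\frac1{q'})-\frac12}\|\bv\|_2$ with $2\le q'<6$. This is the gradient analogue of part (i) for the adjoint semigroup, and is obtained by the same perturbative scheme applied to $\nabla\bv$. Hardy-type control of $\bU^\bb\bv$ via $\|\bU^\bb\|_{3,\infty}$ and Sobolev embedding $\dot W^{1,r}\subset L^{r^*}$ require $q'<6$, i.e.\ $q>\frac65$, which explains the restricted range. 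Once more the smallness $|\bb|\le\delta_2(q)$ closes the fixed point.
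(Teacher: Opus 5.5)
Your part (i) reconstructs the Hishida--Schonbek/Yamazaki contraction that the paper simply cites. Your part (ii) takes a different route from the paper: you use duality plus gradient bounds for $e^{-tL_i^*}$, whereas the paper interpolates between $\|e^{-tL_i}\mathbb P\nabla\cdot\BF\|_q\le Ct^{-1/2}\|\BF\|_q$ and $\|e^{-tL_i}\mathbb P\nabla\cdot\BF\|_{r_2,\infty}\le Ct^{-\frac32(\frac1q-\frac1{r_2})-\frac12}\|\BF\|_q$, both taken from the proof of Hishida--Schonbek's Proposition~4.6.

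\textbf{The gap.} The genuine gap is the passage from what such a fixed point produces to the norms the statement asks for. Yamazaki's estimate is dual to the endpoint bound $\int_0^\infty\|\nabla e^{t\Delta}\phi\|_{r',1}\,\mathrm dt\le C\|\phi\|_{p',1}$. So the contraction closes only in weighted weak norms $\sup_t t^{\alpha}\|\bv(t)\|_{p,\infty}$, whereas \eqref{eqn:decay1} is a strong $L^2$ bound.

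The upgrade you describe does not work. With $p=2$ you only get $\|e^{-tL_i}\Bf\|_{2,\infty}$, and there is no $L^{p,\infty}\to L^2$ bound for $e^{-\frac t2 L_i}$ to combine it with:
\begin{itemize}
\item for $p=2$ it is false already for $e^{s\Delta}$, since data decaying like $|x|^{-3/2}$ at infinity stay outside $L^2$;
\item for $p<2$ it is a weak-type form of the very estimate being proved;
\item \eqref{eqn:corollary2} is only an $L^2\to L^2$ bound.
\end{itemize}
What you need is to run the contraction at two exponents $p_1<2<p_2$ and use $(L^{p_1,\infty},L^{p_2,\infty})_{\theta,2}=L^2$, i.e.\ $\|g\|_2\le C\|g\|_{p_1,\infty}^{1-\theta}\|g\|_{p_2,\infty}^{\theta}$. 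This is precisely the one non-citation step in the paper's proof of (ii).

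The exponents are also constrained in ways you did not record:
\begin{itemize}
\item weak H\"older with $\|\bU^\bb\|_{3,\infty}$ lands in $L^{r,\infty}$ with $\frac1r=\frac1p+\frac13$, which needs $r>1$, i.e.\ $p>\frac32$;
\item the weight $\alpha=\frac32(\frac1q-\frac1p)$ must satisfy $\alpha<1$, so that $\int_0^{t/2}(t-\tau)^{-1}\tau^{-\alpha}\,\mathrm d\tau\lesssim t^{-\alpha}$.
\end{itemize}
Hence your choice of ``$p$ near $q$'' is inadmissible for $q<\frac32$; compare the paper's conditions $r_2>\max\{q,\frac32\}$ and $\frac1q-\frac1{r_2}<\frac13$.

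Finally, ``uniqueness in $L^2$'' does not by itself identify the fixed point, which lives in weighted $L^{p,\infty}$, with $e^{-tL_i}\Bf$. You must check that $e^{-tL_i}\Bf$, for $\Bf\in L^2_\sigma\cap L^q_\sigma$, satisfies the same Duhamel identity and lies in a class where the contraction also holds, e.g.\ $L^\infty(0,\infty;L^{2,\infty})$.

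\textbf{Part (ii).} The same defect reappears. A gradient-level contraction for $e^{-tL_i^*}$ gives $\nabla e^{-tL_i^*}\bv$ only in $L^{q',\infty}$. It must also absorb the non-divergence term $\sum_j U_j^\bb\nabla v_j$, again with the borderline kernel $(t-\tau)^{-1}$. But $L^{q',\infty}$ pairs with $L^{q,1}$, not with $\BF\in L^q$, so you would have to interpolate again. For instance, interpolate between the strong bound $\|\nabla e^{-tL_i^*}\bv\|_2\le Ct^{-1/2}\|\bv\|_2$ and a weak bound at some exponent in $(q',6)$. The strong bound is available from the coercivity \eqref{eqn:bddcoercive}, contractivity, and $\|L_i^*e^{-tL_i^*}\|\le Ct^{-1}$.

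Moreover, the restriction $q'<6$ is not a Hardy/Sobolev effect. It is exactly the condition $\frac32(\frac12-\frac1{q'})+\frac12<1$ that makes the time weight integrable at $\tau=0$ in the Duhamel integral. This coincides with the paper's requirement that some $r_2>2$ satisfy $\frac1q-\frac1{r_2}<\frac13$.

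With these repairs your duality route is a legitimate alternative. It costs a second fixed-point argument at the gradient level, whereas the paper reuses Hishida--Schonbek's bounds for $e^{-tL_i}\mathbb P\nabla\cdot$ directly and needs only a single interpolation.
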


\begin{proof} We prove the two claims by relying on the results in \cite{HishidaSchonbek16}. 
\begin{enumerate}[label={(\roman*)}]
    \item Since equations \eqref{eqn:LPE2}$_1$ and \eqref{eqn:linearoperator} coincide with eq.~(2.7) in \cite{HishidaSchonbek16}, the decay estimate for $e^{-tL_1}$ follows directly from \cite[Theorem 2.2]{HishidaSchonbek16} by choosing $r_0 = 3$ and $r = 2$. The proof for $e^{-tL_2}$ is analogous and will be omitted.
\item The estimate \cref{eqn:decay2} is implicitly contained in the proof of  \cite[Proposition~4.6]{HishidaSchonbek16}.
For any $1<q<\infty$, fix an exponent $r_2$ satisfying  
\[
\max\{q,\tfrac32\}<r_2<\infty,\qquad \frac1q-\frac1{r_2}<\frac13.
\] 
Such an $r_2$ exists and is determined by $q$. From the proof of Proposition~4.6 in \cite{HishidaSchonbek16}, there exists a constant $\bar \delta=\bar \delta(q)>0$ such that if $|\bb|\le \bar\delta$, then the following estimates hold for all $t>0$ and $\BF\in L^q(\mathbb R^3)^{3\times3}$:
\begin{align*}
\|e^{-tL_1}\mathbb P\operatorname{div}\BF\|_q &\le C t^{-1/2}\|\BF\|_q,\\
\|e^{-tL_1}\mathbb P\operatorname{div}\BF\|_{r_2,\infty} &\le C t^{-\frac32\bigl(\frac1q-\frac1{r_2}\bigr)-\frac12}\|\BF\|_q.
\end{align*}
One can verify that $q<2<r_2$ provided $\frac65<q<2$.
By real interpolation of Lorentz spaces (see, e.\,g., \cite[Theorem 5.3.1]{BerghLofstrom76}),
\[
(L^{q,\infty}, L^{r_2,\infty})_{\theta,2}=L^{2,2}=L^2,
\]
where $\theta\in(0,1)$ is defined by $\frac12 = \frac{1-\theta}{q} + \frac{\theta}{r_2}$.
Consequently,
\[
\|e^{-tL_1}\mathbb P\operatorname{div}\BF\|_2 \le C \|e^{-tL_1}\mathbb P\operatorname{div}\BF\|_{q,\infty}^{1-\theta}
\|e^{-tL_1}\mathbb P\operatorname{div}\BF\|_{r_2,\infty}^{\theta}.
\]
Using the two estimates above and the elementary embedding $\|\cdot\|_{q,\infty}\le C\|\cdot\|_q$, we obtain
\[
\|e^{-tL_1}\mathbb P\operatorname{div}\BF\|_2 
\le  C t^{-\frac32\bigl(\frac1q-\frac12\bigr)-\frac12}\|\BF\|_q.
\]
The proof for $e^{-tL_2}$ is analogous and is omitted. 
Let $\bar{\bar\delta}(q)$ denote the corresponding constant for $L_2$ and set 
$\delta_2(q)=\min\{\bar\delta(q),\bar{\bar\delta}(q)\}$. 
This completes the proof.
\end{enumerate}
\end{proof}

\section{Asymptotic stability of weak solutions}
\label{sec:proofs}

In this section, we prove \cref{thm:stability} and \cref{thm:decay}. 
 Given a weak solution $(\bw,\bB)$ to \cref{eqn:PerturbedEqn}, a duality argument yields, for all $s \ge 0$,
\begin{align*}
\|\bw(s)\|_2 &\leq \|e^{-sL_1} \bw_0\|_2 + \int_0^s \bigl\|e^{-(s-\tau)L_1} \mathbb{P} \nabla\cdot (\bw \otimes \bw - \bB \otimes \bB)\bigr\|_2 \, \mathrm{d}\tau,
\\
\|\bB(s)\|_2 &\leq \|e^{-sL_2} \bB_0\|_2 + \int_0^s \bigl\|e^{-(s-\tau)L_2} \mathbb{P} \nabla\cdot (\bw \otimes \bB - \bB \otimes \bw)\bigr\|_2 \, \mathrm{d}\tau.
\end{align*}
Starting from this consideration, the proofs of the main theorems rely on the strong energy inequality and on the following auxiliary proposition. Its first part provides an estimate for the linear terms, while the second part handles the nonlinear terms.

\begin{proposition} \label{lemma:a}
Let $L_i$, with $i =1,\,2$, be the linear operators defined in \cref{eqn:linearoperator}. Then the following propositions hold.
\begin{propenum}
\item \label{it:lma1} There exists a constant $\delta_3>0$ such that if $0<|\bb|\le \delta_3$, then for every $\Bf\in L^2_{\sigma}$,
\begin{align}\label{eqn:linearpart}
    \lim\limits_{t\to\infty} \frac{2}{t} \int_{\frac{t}{2}}^t \|e^{-sL_i}\Bf\|_2 \, \mathrm d s=0.
\end{align}    
 \item \label{it:lma2} Let $\frac{6}{5}<r\le2$ and let $\delta_2 = \delta_2(r) > 0$ be the constant from \cref{it:lmdecay2}. If $0<|\bb|\le \delta_2$, then for all $\bu,\bv\in H^1(\R^3)$,
    \begin{align}\label{eqn:nonlinearpart}
    \|e^{-sL_i} \mathbb{P} \nabla\cdot (\bu\otimes \bv)\|_2 \le C s^{-\frac{3}{2}\left(\frac{1}{r}-\frac{1}{2}\right)-\frac{1}{2}} 
    (\|\bu\|_2\|\bv\|_2)^{\frac{3}{2r}-\frac{1}{2}} (\|\nabla\bu\|_2\|\nabla \bv\|_2)^{\frac{3}{2}(1-\frac{1}{r})}.
    \end{align}
\end{propenum}
\end{proposition}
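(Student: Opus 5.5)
For \cref{it:lma2}, I would apply \cref{it:lmdecay2} with $q=r$ and $\BF=\bu\otimes\bv$. This gives
\[
\|e^{-sL_i}\mathbb P\nabla\cdot(\bu\otimes\bv)\|_2\le Cs^{-\frac32(\frac1r-\frac12)-\frac12}\|\bu\otimes\bv\|_r,
\]
so it only remains to bound $\|\bu\otimes\bv\|_r$. I would use Hölder with $\frac1r=\frac1p+\frac1p$, i.e.\ $p=2r\in(\frac{12}{5},4]\subset[2,6]$, which gives $\|\bu\otimes\bv\|_r\le\|\bu\|_{2r}\|\bv\|_{2r}$. Then I would interpolate each factor between $L^2$ and $L^6$ and apply Sobolev: $\|\bu\|_{2r}\le C\|\bu\|_2^{1-\theta}\|\nabla\bu\|_2^{\theta}$ with $\frac1{2r}=\frac{1-\theta}{2}+\frac{\theta}{6}$. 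Solving gives $\theta=\frac32(1-\frac1r)$ and $1-\theta=\frac3{2r}-\frac12$, which are exactly the exponents in \cref{eqn:nonlinearpart}. Multiplying the two bounds completes this part; it is routine.

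For \cref{it:lma1}, I would take $\delta_3=\min\{\ve_0,\frac1{6C_0},\delta_1(1)\}$ or a similar minimum, so that \cref{eqn:corollary2} and \cref{it:lmdecay1} both hold for some fixed $q\in[1,2)$; say $q=1$ if that is convenient. The key point is that we want a limit with no rate, for general $\Bf\in L^2_\sigma$. I would use a density argument. Given $\ve>0$, I choose $\boldsymbol g\in C^\infty_{0,\sigma}\subset L^q_\sigma\cap L^2_\sigma$ with $\|\Bf-\boldsymbol g\|_2<\ve$. Then, by \cref{eqn:corollary2}, for every $s$,
\[
\|e^{-sL_i}\Bf\|_2\le \|e^{-sL_i}(\Bf-\boldsymbol g)\|_2+\|e^{-sL_i}\boldsymbol g\|_2\le \ve+Cs^{-\frac32(\frac1q-\frac12)}\|\boldsymbol g\|_q,
\]
where the last step uses \cref{eqn:decay1}. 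Averaging over $s\in[t/2,t]$ gives
\[
\frac2t\int_{t/2}^t\|e^{-sL_i}\Bf\|_2\,\mathrm ds\le\ve+C(t/2)^{-\frac32(\frac1q-\frac12)}\|\boldsymbol g\|_q.
\]
Hence the limsup is at most $\ve$, and since $\ve$ is arbitrary the limit is $0$. The same argument in fact gives $\|e^{-tL_i}\Bf\|_2\to0$ pointwise, which is stronger than the averaged statement.

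I expect the main subtlety to be checking that the approximating $\boldsymbol g$ can be chosen in $L^q_\sigma$. This holds because $L^q_\sigma$ and $L^2_\sigma$ are both completions of $C^\infty_{0,\sigma}$, so we may take $\boldsymbol g\in C^\infty_{0,\sigma}$. I would also need to check that the semigroup $e^{-tL_i}$ appearing in \cref{eqn:decay1} coincides with the $L^2$ one on $L^q_\sigma\cap L^2_\sigma$. This is implicit in \cref{prop:decay}. For the $r$-range in \cref{it:lma2}, it should also be noted that $2r\le 4<6$, so the interpolation is legitimate.
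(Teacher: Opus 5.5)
Your proposal is correct and follows the paper's proof essentially step for step. Part (ii) is the same application of \cref{it:lmdecay2} with $q=r$, followed by H\"older and Gagliardo--Nirenberg with $\theta=\frac32(1-\frac1r)$. Part (i) is the same density argument. The only cosmetic difference is that the paper fixes $q=\frac32$ and takes $\delta_3=\min\{\delta_1(\frac32),\delta_1(2)\}$, so it bounds the $L^2$ error term via \cref{eqn:decay1} with $q=2$ rather than via the contraction estimate \cref{eqn:corollary2}.
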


\begin{proof} The two claims are consequences of \cref{prop:decay}.
\begin{enumerate}[label={(\roman*)}]
    \item Let $\delta_3=\min\{\delta_1(\frac{3}{2}),\delta_1(2)\}$, where $\delta_1(r)$ is the constant defined in \cref{it:lmdecay1}. 
For all $\Bf \in L^2_\sigma$ and every $\varepsilon > 0$, we can choose $\Bf_\varepsilon \in C_{c,\sigma}^{\infty}$ such that $\|\Bf - \Bf_\varepsilon\|_2 \le \varepsilon$. Then 
\[
\|e^{-sL_i} \Bf\|_2 
\le \|e^{-sL_i} (\Bf - \Bf_\varepsilon)\|_2 + \|e^{-sL_i} \Bf_\varepsilon\|_2 
\le C \varepsilon + C s^{-\frac{1}{4}} \|\Bf_\varepsilon\|_{\frac32},
\]
where in the last inequality we used \cref{eqn:decay1}. Therefore we have
\[
\frac{2}{t} \int_{\frac{t}{2}}^{t} \|e^{-sL_i} \Bf\|_2 \, \mathrm d s 
\le C\varepsilon + Ct^{-\frac{1}{4}} \|\Bf_\varepsilon\|_{\frac{3}{2}}.
\]
From this, we deduce that
\[
\lim_{t \to \infty} \frac{2}{t} \int_{\frac{t}{2}}^{t} \|e^{-sL_i} \Bf\|_2 \, \mathrm ds \le C \varepsilon.
\]
Since $\varepsilon$ is arbitrary, it follows that $\lim\limits_{t \to \infty} \frac{2}{t} \int_{\frac{t}{2}}^{t} \|e^{-sL_i} \Bf\|_2 \, \mathrm ds = 0$.\\

\item  Using the decay property of $e^{-tL_i}$ in \cref{it:lmdecay2}, we have the estimate
\[
\|e^{-sL_i} \mathbb{P} \nabla\cdot (\bu \otimes \bv)\|_2 
\leq C s^{-\frac{3}{2}\left(\frac{1}{r}-\frac{1}{2}\right) - \frac{1}{2}} \|\bu \otimes \bv\|_r.
\]
H\"older's inequality and Gagliardo--Nirenberg's inequality imply 
\[
\|\bu \otimes \bv\|_r 
\le \|\bu\|_{2r} \| \bv\|_{2r}
\le (\|\bu\|_{2}\|\bv\|_{2})^{1-\theta} 
(\|\nabla \bu\|_{2}\|\nabla \bv\|_{2})^{\theta},
\]
where $\theta$ satisfies $\frac{1}{2r}=\frac{1-\theta}{2}+\theta(\frac{1}{2}-\frac{1}{3})$; in particular, $\theta\in(0,1)$ precisely when $r\in(\frac{6}{5},2]$. Consequently,
\[
\|\bu \otimes \bv\|_r 
\le  (\|\bu\|_{2}\|\bv\|_{2})^{\frac{3}{2r}-\frac{1}{2}} 
(\|\nabla \bu\|_{2}\|\nabla \bv\|_{2})^{\frac{3}{2}(1-\frac{1}{r})},
\]
which completes the proof.
\end{enumerate}
\end{proof}

We now turn to the proof of \cref{thm:stability}.  

\begin{proof}[Proof of \cref{thm:stability}] 

\uline{Step 1.} \emph{Duhamel-type representation.} 
As $(\bw,\bB)$ is a weak solution to \cref{eqn:PerturbedEqn}, we have
\begin{align}\label{eqn:weakformulation1}
\langle \bw(s), \boldsymbol{\varphi}(s)  \rangle
+\int_0^s \Big[ 
\langle \bw\cdot  \nabla \bw, \boldsymbol{\varphi} \rangle
- \langle \bB\cdot  \nabla \bB, \boldsymbol{\varphi} \rangle
+ a_1(\bw,\boldsymbol{\vp})
\Big]\, \mathrm d\tau 
= \langle \bw(0), \boldsymbol{\varphi}(0)  \rangle 
+ \int_0^s \langle \bw, \boldsymbol{\varphi}_\tau \rangle \, \mathrm d\tau
\end{align}
for all $s\geq 0$ and all $\boldsymbol{\varphi}$ satisfying 
\[
\boldsymbol{\varphi} \in C\big( [0, \infty); L^2_\sigma \cap L^3 \big), \quad 
\nabla \boldsymbol{\varphi} \in L^2_{\mathrm{loc}}\big( [0,\infty); L^2 \big), \quad 
\partial_\tau \boldsymbol{\varphi} \in L^2_{\mathrm{loc}}\big( [0,\infty); (H^1_\sigma)^* \big).
\]

There exists a constant $0<\delta_4\le \min \{ \ve_0, \frac{1}{6C_0}\}$ such that for any $|\bb|\le\delta_4$ and $\boldsymbol{\psi}\in C^{\infty}_{c,\sigma}$, the function $\boldsymbol{\vp}(\tau)=e^{-(s-\tau)L_i^*}\boldsymbol{\psi}$ belongs to the admissible test function space for $i=1,2$.
Indeed, by the analyticity of the semigroup $e^{-tL_1^*}$ (see, \cref{thm:analyticsemigroup} and \cite[Theorem 3.1]{LZZ23}), we have $\boldsymbol{\vp}\in C([0,\infty);L^2_\sigma\cap L^3)$.  
Moreover, using the commutativity $L_1^{*\frac12}e^{-tL_1^*}=e^{-tL_1^*}L_1^{*\frac12}$ (see, e.\,g., \cite[Proposition 3.1.1(f)]{Haase06} and \cite[Theorem 1.2.4(c)]{Pazy1983}) together with the estimates \cref{eqn:corollary1} and \cref{eqn:corollary2}, we obtain
\[
\| \nabla \boldsymbol{\vp}(\tau) \|_2
\lesssim \| L_1^{*\frac{1}{2}} e^{-(s-\tau)L_1^*} \boldsymbol{\psi} \|_2  
= \| e^{-(s-\tau)L_1^*} L_1^{*\frac{1}{2}} \boldsymbol{\psi} \|_2  
\le \| L_1^{*\frac{1}{2}} \boldsymbol{\psi} \|_2 
\lesssim \|\nabla \boldsymbol{\psi} \|_2,
\]
which implies $\nabla\boldsymbol{\vp}\in L^\infty_{\mathrm{loc}}([0,\infty);L^2)\subset L^2_{\mathrm{loc}}([0,\infty);L^2)$.  
For the time derivative, we note that $\partial_\tau\boldsymbol{\vp}(\tau)=L_1^*\boldsymbol{\vp}(\tau)$. Then for any $\bv\in H^1_\sigma$,
\[
|\langle\partial_\tau\boldsymbol{\vp}(\tau),\bv\rangle|=|a_1(\boldsymbol{\vp}(\tau),\bv)|\le C\|\boldsymbol{\vp}(\tau)\|_{H^1}\|\bv\|_{H^1},
\]
so that by duality $\|\partial_\tau\boldsymbol{\vp}(\tau)\|_{(H^1_\sigma)^*}\le C(\|\boldsymbol{\vp}(\tau)\|_2+\|\nabla\boldsymbol{\vp}(\tau)\|_2)$.  
The boundedness of $\|\boldsymbol{\vp}(\tau)\|_2$ (from \cref{eqn:corollary2}) and of $\|\nabla\boldsymbol{\vp}(\tau)\|_2$ then yields $\partial_\tau\boldsymbol{\vp}\in L^2_{\mathrm{loc}}([0,\infty);(H^1_\sigma)^*)$.  
Thus $\boldsymbol{\vp}$ satisfies the required regularity conditions.

Taking this $\boldsymbol{\vp}$ as a test function in \eqref{eqn:weakformulation1} and using $\langle\bw,\partial_\tau\boldsymbol{\vp}\rangle=\langle\bw,L_1^*\boldsymbol{\vp}\rangle=a_1(\bw,\boldsymbol{\vp})$, we obtain
\begin{align*}
\langle \bw(s), \boldsymbol{\psi} \rangle
= \Big\langle e^{-sL_1}\bw_0
- \int_0^s e^{-(s-\tau)L_1}\mathbb{P}(\bw\cdot\nabla\bw-\bB\cdot\nabla\bB)\, \mathrm d\tau, \boldsymbol{\psi} \Big\rangle.
\end{align*}
By duality, this gives
\begin{align}\label{eqn:estimate2-1}
\|\bw(s)\|_2 \leq \|e^{-sL_1} \bw_0\|_2 + \int_0^s \bigl\|e^{-(s-\tau)L_1} \mathbb{P} \nabla\cdot (\bw \otimes \bw - \bB \otimes \bB)\bigr\|_2 \, \mathrm{d}\tau.
\end{align}
Similarly, taking $\boldsymbol{\vp}(\tau)=e^{-(s-\tau)L_2^*}\boldsymbol{\psi}$ as a test function in \cref{eqn:weak}$_2$ yields
\begin{align}\label{eqn:estimate2-2}
\|\bB(s)\|_2 \leq \|e^{-sL_2} \bB_0\|_2 + \int_0^s \bigl\|e^{-(s-\tau)L_2} \mathbb{P} \nabla\cdot (\bw \otimes \bB - \bB \otimes \bw)\bigr\|_2 \, \mathrm{d}\tau.
\end{align}

\uline{Step 2.} \emph{Energy monotonicity and time averaging}. If $0<|\bb|<\min\{\ve_0,\frac{1}{4C_0}\}$, Hardy's inequality \cref{eqn:Hardy} and the strong energy inequality \cref{eqn:energyineq1} imply
\begin{align}\label{eqn:energyineq2}
\|\bw(t)\|_2^2 + \|\bB(t)\|_2^2 + 2(1-4C_0|\bb|) \int_s^t \left(\|\nabla \bw\|_2^2 + \|\nabla \bB\|_2^2\right) \, \mathrm d\tau \leq \|\bw(s)\|_2^2 + \|\bB(s)\|_2^2,
\end{align}
which yields
\begin{align}\label{eq:s-est-0}
\|\bw(t)\|_2^2 + \|\bB(t)\|_2^2 \leq \|\bw(s)\|_2^2 + \|\bB(s)\|_2^2
\end{align}
for almost every $s\geq 0$ (including $s=0$) and for all $t\geq s$.
Therefore, we obtain
\begin{align*}
\frac{\|\bw(t)\|_2 + \|\bB(t)\|_2}{\sqrt{2}} 
\leq \sqrt{\|\bw(t)\|_2^2 + \|\bB(t)\|_2^2} 
\leq \sqrt{\|\bw(s)\|_2^2 + \|\bB(s)\|_2^2} 
\leq \|\bw(s)\|_2 + \|\bB(s)\|_2.
\end{align*}
Integrating the inequality over $s\in[\frac{t}{2}, t]$ gives
\begin{align}\label{eqn:estimate1}
\frac{\|\bw(t)\|_2 + \|\bB(t)\|_2}{\sqrt{2}} 
\leq \frac{2}{t} \int_{\frac{t}{2}}^{t} \|\bw(s)\|_2 \, \mathrm ds + \frac{2}{t} \int_{\frac{t}{2}}^{t} \|\bB(s)\|_2 \, \mathrm ds. 
\end{align}

\uline{Step 3.} \emph{Combination of linear and nonlinear contributions.} Combining \cref{eqn:estimate2-1}, \cref{eqn:estimate2-2} and \cref{eqn:estimate1}, we deduce 
\begin{equation}\label{eqn:estimate3}
\begin{aligned}
\frac{\|\bw(t)\|_2 + \|\bB(t)\|_2}{\sqrt{2}} 
 &\leq \frac{2}{t} \int_{\frac{t}{2}}^t \bigl( \|e^{-sL_1} \bw_{0}\|_2 + \|e^{-sL_2} \bB_0\|_2 \bigr) \, \mathrm{d} s\\
& +  \frac{2}{t} \int_{\frac{t}{2}}^{t} \Bigg[ \int_0^s \Bigl( \|e^{-(s-\tau)L_1} \mathbb{P}\nabla\cdot (\bw \otimes \bw - \bB \otimes \bB)\|_2 
\\ & \qquad \qquad \qquad  + \|e^{-(s-\tau)L_2} \mathbb{P}\nabla\cdot (\bw\otimes \bB - \bB \otimes \bw)\|_2 \Bigr) \, \mathrm{d} \tau \Bigg] \mathrm{d}s.
\end{aligned}
\end{equation}

\uline{Step 4.} \emph{Linear contribution.} If $|\bb|\le \delta_3$, where $\delta_3$ is the constant in \cref{it:lma1} ,
\[
\lim\limits_{t\to \infty} \frac{2}{t} \int_{\frac{t}{2}}^t \bigl( \|e^{-sL_1} \bw_{0}\|_2 + \|e^{-sL_2} \bB_0\|_2 \bigr) \, \mathrm{d} s=0.
\]

\uline{Step 5.} \emph{Nonlinear contribution.} Inequality \cref{eqn:nonlinearpart} implies that
\begin{align*}
&\frac{2}{t} \int_{\frac{t}{2}}^{t} \left[ \int_0^s  \|e^{-(s-\tau)L_1} \mathbb{P}\nabla\cdot (\bw \otimes \bw)\|_2 \, \mathrm{d} \tau \right] \mathrm{d}s\\
& \le  \frac{C}{t} \int_{\frac{t}{2}}^{t} 
\left[ \int_0^s (s-\tau)^{-\frac{3}{2}(\frac{1}{r_1}-\frac{1}{2})-\frac{1}{2}} \|\bw(\tau)\|_2^{\frac{3}{r_1}-1}  h^{\frac
32(1-\frac{1}{r_1})} \, \mathrm{d} \tau \right] \mathrm{d}s,
\end{align*}
where $h(\tau)=\|\nabla \bw(\tau)\|_2^2$ and $r_1\in (\frac{6}{5},2]$ is a constant to be determined. 
\cref{eqn:energyineq2} implies $h\in L^{1}(0,\infty)$; moreover, by \eqref{eq:s-est-0}, $\|\bw(\tau)\|_2 \le \|\bw_0\|_2+\|\bB_0\|_2$.
Therefore
\[
\frac{C}{t} \int_{\frac{t}{2}}^{t} 
\left[ \int_0^s (s-\tau)^{-\frac{3}{2}(\frac{1}{r_1}-\frac{1}{2})-\frac{1}{2}} \|\bw(\tau)\|_2^{\frac{3}{r_1}-1}  h^{\frac
32(1-\frac{1}{r_1})} \, \mathrm{d} \tau \right] \mathrm{d}s
\le 
\frac{C}{t} \int_{\frac{t}{2}}^{t} 
|\cdot|^{-\frac{3}{2}(\frac{1}{r_1}-\frac{1}{2})-\frac{1}{2}} \ast h^{\frac{3}{2}(1-\frac{1}{r_1})}\, \mathrm{d}s.
\]
Using \cref{eqn:weakLpnorm}, we have
\begin{align*}
\frac{C}{t} \int_{\frac{t}{2}}^{t} 
|\cdot|^{-\frac{3}{2}(\frac{1}{r}-\frac{1}{2})-\frac{1}{2}} \ast h^{\frac{3}{2}(1-\frac{1}{r_1})}\, \mathrm{d}s
& \le Ct^{-\frac{1}{\alpha_1}}\left\| |\cdot|^{-\frac{3}{2}(\frac{1}{r_1}-\frac{1}{2})-\frac{1}{2}} \ast h^{\frac{3}{2}(1-\frac{1}{r_1})} \right\|_{\alpha_1,\infty}\\
& \le  Ct^{-\frac{1}{\alpha_1}}
\left\| |\cdot|^{-\frac{3}{2}(\frac{1}{r_1}-\frac{1}{2})-\frac{1}{2}}\right\|_{c_1,\infty} 
\left\| h^{\frac{3}{2}(1-\frac{1}{r_1})} \right\|_{c_2}.
\end{align*}
Here we used \cref{eqn:Youngineq} and the embedding $L^p \subset L^{p,\infty}$ in the second line, and $\alpha_1,c_1,c_2>1$ are constants satisfying
\[
\frac{1}{c_1}=\frac{3}{2}\left(\frac{1}{r_1}-\frac{1}{2}\right)+\frac{1}{2},\quad
\frac{1}{c_2}=\frac{3}{2}\left(1-\frac{1}{r_1}\right),
\]
and
\[
1+\frac{1}{\alpha_1}=\frac{1}{c_1}+\frac{1}{c_2}.
\]
Note that $\frac{1}{c_1}+\frac{1}{c_2}=\frac{5}{4}$, so $\frac{1}{\alpha_1}=\frac{1}{4}$ provided $c_1,c_2\in (1,\infty)$. For instance, taking $r_1=\frac{3}{2}$ gives $c_1,c_2\in(1,\infty)$.
Therefore, if $|\bb|\le \delta_2(\frac{3}{2})$ with $\delta_2(\frac{3}{2})$ as in \cref{it:lma2}, 
\[
\frac{2}{t} \int_{\frac{t}{2}}^{t} \left[ \int_0^s \|e^{-(s-\tau)L_1} \mathbb{P}\nabla\cdot (\bw \otimes \bw)\|_2 \, \mathrm{d} \tau \right] \mathrm{d}s \le C t^{-\frac{1}{4}}.
\]
Similarly, 
\begin{align*}
&\frac{2}{t} \int_{\frac{t}{2}}^{t} \left[ \int_0^s \Bigl( \|e^{-(s-\tau)L_1} \mathbb{P}\nabla\cdot (\bw \otimes \bw - \bB \otimes \bB)\|_2 
+ \|e^{-(s-\tau)L_2} \mathbb{P}\nabla\cdot (\bw\otimes \bB - \bB \otimes \bw)\|_2 \Bigr) \, \mathrm{d} \tau \right] \, \mathrm{d}s \\
&\le Ct^{-\frac{1}{4}}
\end{align*}
provided $|\bb| \le \delta_2(\frac{3}{2})$.

\uline{Step 6.} \emph{Conclusion.} In conclusion, if $0<|\bb|\le \delta$, where $\delta=\min\{ \delta_4, \delta_3, \delta_2(\frac{3}{2})\}$, \cref{eqn:estimate3} yields
\[
\lim\limits_{t\to \infty}\|\bw(t)\|_2 
=\lim\limits_{t\to \infty} \|\bB(t)\|_2=0.
\]
\end{proof}

We now establish the decay estimate for the perturbed equation \cref{eqn:PerturbedEqn}. The proof proceeds by iteration.

\begin{proof}[Proof of \cref{thm:decay}]

\uline{Step 1.} \emph{Linear decay estimate.} By
\cref{it:lmdecay1}
\[
\frac{2}{t} \int_0^t \bigl( \|e^{-sL_1} \bw_{0}\|_2 + \|e^{-sL_2} \bB_0\|_2 \bigr) \, \mathrm{d} s 
\le Ct^{-\frac{3}{2}(\frac{1}{q}-\frac{1}{2})} (\|\bw_0\|_q + \|\bB_0\|_q).
\]

\uline{Step 2.} \emph{Baseline decay bound.} 
From the estimates obtained in the proof of \cref{thm:stability}, 
\begin{align*}
&\frac{2}{t} \int_{\frac{t}{2}}^{t} \left[ \int_0^s \Bigl( \|e^{-(s-\tau)L_1} \mathbb{P}\nabla\cdot (\bw \otimes \bw - \bB \otimes \bB)\|_2 
+ \|e^{-(s-\tau)L_2} \mathbb{P}\nabla\cdot (\bw\otimes \bB - \bB \otimes \bw)\|_2 \Bigr) \, \mathrm{d} \tau \right] \, \mathrm{d}s \\
&\le Ct^{-\frac{1}{4}}.
\end{align*}
Combining these estimates with \cref{eqn:estimate3}, we have
\[
\|\bw(t)\|_2+\|\bB(t)\|_2 \le Ct^{-\frac{3}{2}(\frac{1}{q}-\frac{1}{2})} + Ct^{-\frac{1}{4}}.
\]

\uline{Step 3.} \emph{Decay regimes according to the integrability exponent $q$.} Thus \cref{thm:decay} holds whenever $-\frac{1}{4} \le -\frac{3}{2}(\frac{1}{q}-\frac{1}{2})$, that is, $\frac{3}{2} \le q <2$. If $1 \le q < \frac{3}{2}$, $\|\bw(t)\|_2+\|\bB(t)\|_2 \le Ct^{-\frac{1}{4}}$.

\uline{Step 4.} \emph{Bootstrap improvement of decay for $1 \le q < \frac{3}{2}$.}
Similarly, \cref{eqn:nonlinearpart} shows
\begin{align*}
&\frac{2}{t} \int_{\frac{t}{2}}^{t} \left[ \int_0^s  \|e^{-(s-\tau)L_1} \mathbb{P}\nabla\cdot (\bw \otimes \bw)\|_2 \, \mathrm{d} \tau \right] \mathrm{d}s\\
& \le  \frac{C}{t} \int_{\frac{t}{2}}^{t} 
\left[ \int_0^s (s-\tau)^{-\frac{3}{2}(\frac{1}{r_2}-\frac{1}{2})-\frac{1}{2}} \|\bw(\tau)\|_2^{\frac{3}{r_2}-1}  h^{\frac32(1-\frac{1}{r_2})} \, \mathrm{d} \tau \right] \mathrm{d}s.
\end{align*}
Here $h(\tau)=\|\bw(\tau)\|_2^2$, and $r_2\in (\frac{6}{5},2]$ is a constant to be determined. Using $\|\bw(\tau)\|_2 \le C\tau^{-\frac{1}{4}}$, we obtain
\begin{align*}
&\frac{C}{t} \int_{\frac{t}{2}}^{t} 
\left[ \int_0^s (s-\tau)^{-\frac{3}{2}(\frac{1}{r_2}-\frac{1}{2})-\frac{1}{2}} \|\bw(\tau)\|_2^{\frac{3}{r_2}-1}  h^{\frac32(1-\frac{1}{r_2})} \, \mathrm{d} \tau \right] \mathrm{d}s\\
& \le   \frac{C}{t} \int_{\frac{t}{2}}^{t} 
\left[ \int_0^s (s-\tau)^{-\frac{3}{2}(\frac{1}{r_2}-\frac{1}{2})-\frac{1}{2}}  \tau^{-\frac{1}{4}(\frac{3}{r_2}-1)}  h^{\frac{3}{2}(1-\frac{1}{r_2})} \, \mathrm{d} \tau \right] \mathrm{d}s\\
&=\frac{C}{t} \int_{\frac{t}{2}}^{t} 
|\cdot|^{-\frac{3}{2}(\frac{1}{r_2}-\frac{1}{2})-\frac{1}{2}} \ast ( |\cdot|^{-\frac{1}{4}(\frac{3}{r_2}-1)} \, h^{\frac{3}{2}(1-\frac{1}{r_2})})\, \mathrm{d}s,
\end{align*}
Recalling
\cref{eqn:weakLpnorm}, \cref{eqn:Youngineq}, and \cref{eqn:Holderineq}, we deduce 
\begin{align*}
&\frac{C}{t} \int_{\frac{t}{2}}^{t} 
|\cdot|^{-\frac{3}{2}(\frac{1}{r_2}-\frac{1}{2})-\frac{1}{2}} \ast ( |\cdot|^{-\frac{1}{4}(\frac{3}{r_2}-1)} \, h^{\frac{3}{2}(1-\frac{1}{r_2})})\, \mathrm{d}s \\
& \le 
C t^{-\frac{1}{\alpha_2}}
\left\| |\cdot|^{-\frac{3}{2}(\frac{1}{r_2}-\frac{1}{2})-\frac{1}{2}} \ast ( |\cdot|^{-\frac{1}{4}(\frac{3}{r_2}-1)} \, h^{\frac{3}{2}(1-\frac{1}{r_2})}) \right\|_{\alpha_2,\infty}\\
& \le  C t^{-\frac{1}{\alpha_2}}
\left\| |\cdot|^{-\frac{3}{2}(\frac{1}{r_2}-\frac{1}{2})-\frac{1}{2}} \right\|_{c_3,\infty}
\left\| |\cdot|^{-\frac{1}{4}(\frac{3}{r_2}-1)} \right\|_{c_4,\infty}
\left\| h^{\frac{3}{2}(1-\frac{1}{r_2})} \right\|_{c_5,\infty},
\end{align*}
where $\alpha_2,c_3,c_4,c_5>1$ are constants satisfying
\[
\frac{1}{c_3}=\frac{3}{2}\left(\frac{1}{r_2}-\frac{1}{2}\right)+\frac{1}{2},\quad
\frac{1}{c_4}= \frac{1}{4}\left(\frac{3}{r_2}-1\right),\quad 
\frac{1}{c_5}=\frac{3}{2}\left(1-\frac{1}{r_2}\right),
\]
and
\[
0< \frac{1}{c_4}+\frac{1}{c_5} <1,\quad 1+\frac{1}{\alpha_2}=\frac{1}{c_3}+\frac{1}{c_4}+\frac{1}{c_5}.
\]
Taking $r_2=\frac{15}{8}$ gives $\frac{1}{\alpha_2}=\frac{2}{5}$, which yields
\[
\|\bw(t)\|_2+\|\bB(t)\|_2 \le Ct^{-\frac{3}{2}(\frac{1}{q}-\frac{1}{2})} + Ct^{-\frac{2}{5}}.
\]
Thus \cref{thm:decay} holds provided $-\frac{2}{5} \le -\frac{3}{2}(\frac{1}{q}-\frac{1}{2})$, i.\,e., $\frac{30}{23} \le q <\frac{3}{2}$.

\uline{Step 
5.} \emph{Final bootstrap and completion of the proof.} If $1 \le q < \frac{30}{23}$, $\|\bw(t)\|_2+\|\bB(t)\|_2 \le Ct^{-\frac{2}{5}}$. Analogously,
\begin{align*}
&\frac{2}{t} \int_{\frac{t}{2}}^{t} \left[ \int_0^s \|e^{-(s-\tau)L_1} \mathbb{P}\nabla\cdot (\bw \otimes \bw)\|_2 \, \mathrm{d} \tau \right] \mathrm{d}s\\
& \le  C t^{-\frac{1}{\alpha_3}}
\left\| |\cdot|^{-\frac{3}{2}(\frac{1}{r_3}-\frac{1}{2})-\frac{1}{2}} \right\|_{c_6,\infty}
\left\| |\cdot|^{-\frac{2}{5}(\frac{3}{r_3}-1)} \right\|_{c_7,\infty}
\left\| h^{\frac{3}{2}(1-\frac{1}{r_3})} \right\|_{c_8,\infty}.
\end{align*}
Here $r_3\in (\frac{6}{5},2]$ and $\alpha_3,c_6,c_7,c_8>1$ are constants satisfying
\[
\frac{1}{c_6}=\frac{3}{2}\left(\frac{1}{r_3}-\frac{1}{2}\right)+\frac{1}{2},\quad
\frac{1}{c_7}= \frac{2}{5}\left(\frac{3}{r_3}-1\right),\quad 
\frac{1}{c_8}=\frac{3}{2}\left(1-\frac{1}{r_3}\right),
\]
and
\[
0< \frac{1}{c_7}+\frac{1}{c_8} <1, \quad 1+\frac{1}{\alpha_3}=\frac{1}{c_6}+\frac{1}{c_7}+\frac{1}{c_8}.
\]
Choosing $r_3=\frac{4}{3}$ gives $\frac{1}{\alpha_3}=\frac{3}{4}$, which yields
\[
\|\bw(t)\|_2+\|\bB(t)\|_2 \le Ct^{-\frac{3}{2}\left(\frac{1}{q}-\frac{1}{2}\right)} + Ct^{-\frac{3}{4}}.
\]
Thus the proof is complete.
\end{proof}

\begin{remark}[Refinement of the decay rate]
In the proof of \cref{thm:decay}, the constants $r_2$ and $r_3$ can be chosen so that $r_2 > 3/2$ is arbitrarily close to $3/2$ and $r_3 > \frac{6}{5}$ is arbitrarily close to $\frac{6}{5}$. With such a choice, we have $\frac{1}{\alpha_3} = 1 - \varepsilon$ for an arbitrarily small $\varepsilon > 0$, which gives the decay estimate
\[
\|\bw(t)\|_2+\|\bB(t)\|_2 \le C t^{-\frac{3}{2}\left(\frac{1}{q}-\frac{1}{2}\right)} + C t^{-(1-\varepsilon)}.
\]
\end{remark}

\vspace{0.5cm}

\section*{Acknowledgments}

N.~De Nitti is a member of the Gruppo Nazionale per l'Analisi Matematica, la Probabilità e le loro Applicazioni (GNAMPA) of the Istituto Nazionale di Alta Matematica (INdAM) and has received support from the INdAM--GNAMPA Project 2026 \textit{Modelli Non-locali in Fluidodinamica, Traffico ed Elasticità} (CUP:~E53C25002010001). He is grateful to M.~Sammartino and V.~Sciacca for helpful discussions on MHD models at the University of Palermo. He acknowledges Soochow University, Duke Kunshan University, and Shanghai Jiao Tong University for their kind hospitality.

Y.~Wang is partially supported by NSFC grant 12271389, and by the Natural Science Foundation of Jiangsu Province (grant~BK20240147).

S.~Zhang is partially supported by the Postgraduate Research \&
Practice Innovation Program of Jiangsu Province (grant~KYCX24\_3285). He acknowledges the kind hospitality of Duke Kunshan University and Shanghai Jiao Tong University.

\vspace{0.5cm}

\printbibliography

@misc{Zhang2025MHDwithoutMD,
      title={Axisymmetric self-similar solutions to the MHD equations without magnetic diffusion}, 
      author={Shaoheng Zhang},
      year={2025},
      eprint={2506.20131},
      archivePrefix={arXiv},
      primaryClass={math.AP},
      url={https://arxiv.org/abs/2506.20131}, 
}

@article {Zhang2025MHDwithMD,
    AUTHOR = {Zhang, Shaoheng},
     TITLE = {On {A}xisymmetric {S}elf-{S}imilar {S}olutions to the {MHD}
              {S}ystem},
   JOURNAL = {Bull. Malays. Math. Sci. Soc.},
  FJOURNAL = {Bulletin of the Malaysian Mathematical Sciences Society},
    VOLUME = {49},
      YEAR = {2026},
    NUMBER = {2},
     PAGES = {Paper No. 75},
      ISSN = {0126-6705,2180-4206},
   MRCLASS = {35Q35 (76W05)},
  MRNUMBER = {5046242},
       DOI = {10.1007/s40840-026-02076-8},
       URL = {https://doi.org/10.1007/s40840-026-02076-8},
}

@book {Davidson_Book_2nd_2017,
    AUTHOR = {Davidson, P. A.},
     TITLE = {Introduction to magnetohydrodynamics},
    SERIES = {Cambridge Texts in Applied Mathematics},
   EDITION = {Second},
 PUBLISHER = {Cambridge University Press, Cambridge},
      YEAR = {2017},
     PAGES = {xviii+555},
      ISBN = {978-1-316-61302-3},
   MRCLASS = {76W05 (76-01)},
  MRNUMBER = {3699477},
MRREVIEWER = {J. W. Jerome},
       DOI = {10.1017/9781316672853},
       URL = {https://doi.org/10.1017/9781316672853},
}

@article {KarchPilarczyk2011,
    AUTHOR = {Karch, Grzegorz and Pilarczyk, Dominika},
     TITLE = {Asymptotic stability of {L}andau solutions to
              {N}avier-{S}tokes system},
   JOURNAL = {Arch. Ration. Mech. Anal.},
  FJOURNAL = {Archive for Rational Mechanics and Analysis},
    VOLUME = {202},
      YEAR = {2011},
    NUMBER = {1},
     PAGES = {115--131},
      ISSN = {0003-9527},
   MRCLASS = {76D05 (35B35 35Q30)},
  MRNUMBER = {2835864},
MRREVIEWER = {Jos\'{e} Luiz Boldrini},
       DOI = {10.1007/s00205-011-0409-z},
       URL = {https://doi.org/10.1007/s00205-011-0409-z},
}

@article {Landau1944,
    AUTHOR = {Landau, L.},
     TITLE = {A new exact solution of {N}avier-{S}tokes equations},
   JOURNAL = {C. R. (Doklady) Acad. Sci. URSS (N.S.)},
  FJOURNAL = {C. R. (Doklady) Acad. Sci. URSS (N.S.)},
    VOLUME = {43},
      YEAR = {1944},
     PAGES = {286--288},
   MRCLASS = {76.1X},
  MRNUMBER = {11205},
MRREVIEWER = {C. C. Torrance},
}

@article {CannoneKarch2004,
    AUTHOR = {Cannone, Marco and Karch, Grzegorz},
     TITLE = {Smooth or singular solutions to the {N}avier-{S}tokes system?},
   JOURNAL = {J. Differential Equations},
  FJOURNAL = {Journal of Differential Equations},
    VOLUME = {197},
      YEAR = {2004},
    NUMBER = {2},
     PAGES = {247--274},
      ISSN = {0022-0396},
   MRCLASS = {35Q30 (76D03 76D05)},
  MRNUMBER = {2034160},
MRREVIEWER = {Bruno J. Scarpellini},
       DOI = {10.1016/j.jde.2003.10.003},
       URL = {https://doi.org/10.1016/j.jde.2003.10.003},
}

@article {TianXin1998,
    AUTHOR = {Tian, Gang and Xin, Zhouping},
     TITLE = {One-point singular solutions to the {N}avier-{S}tokes
              equations},
   JOURNAL = {Topol. Methods Nonlinear Anal.},
  FJOURNAL = {Topological Methods in Nonlinear Analysis},
    VOLUME = {11},
      YEAR = {1998},
    NUMBER = {1},
     PAGES = {135--145},
      ISSN = {1230-3429},
   MRCLASS = {35Q30 (76D05)},
  MRNUMBER = {1642049},
MRREVIEWER = {Zhi Min Chen},
       DOI = {10.12775/TMNA.1998.008},
       URL = {https://doi.org/10.12775/TMNA.1998.008},
}

@book {Tsai2018,
    AUTHOR = {Tsai, Tai-Peng},
     TITLE = {Lectures on {N}avier-{S}tokes equations},
    SERIES = {Graduate Studies in Mathematics},
    VOLUME = {192},
 PUBLISHER = {American Mathematical Society, Providence, RI},
      YEAR = {2018},
     PAGES = {xii+224},
      ISBN = {978-1-4704-3096-2},
   MRCLASS = {35Q30 (35Q35 76D05)},
  MRNUMBER = {3822765},
MRREVIEWER = {Nader Masmoudi},
       DOI = {10.1090/gsm/192},
       URL = {https://doi.org/10.1090/gsm/192},
}

@incollection {Sverak2011,
    AUTHOR = {\v{S}ver\'{a}k, V.},
     TITLE = {On {L}andau's solutions of the {N}avier-{S}tokes equations},
      NOTE = {Problems in mathematical analysis. No. 61},
   JOURNAL = {J. Math. Sci. (N.Y.)},
  FJOURNAL = {Journal of Mathematical Sciences (New York)},
    VOLUME = {179},
      YEAR = {2011},
    NUMBER = {1},
     PAGES = {208--228},
      ISSN = {1072-3374},
   MRCLASS = {35Q30 (35B06 35C05)},
  MRNUMBER = {3014106},
MRREVIEWER = {Francesca Crispo},
       DOI = {10.1007/s10958-011-0590-5},
       URL = {https://doi.org/10.1007/s10958-011-0590-5},
}

@book {Lions61,
    AUTHOR = {Lions, J.-L.},
     TITLE = {\'Equations diff\'erentielles op\'erationnelles et probl\`emes
              aux limites},
    SERIES = {Die Grundlehren der mathematischen Wissenschaften},
    VOLUME = {Band 111},
 PUBLISHER = {Springer-Verlag, Berlin-G\"ottingen-Heidelberg},
      YEAR = {1961},
     PAGES = {ix+292},
   MRCLASS = {35.00 (34.95)},
  MRNUMBER = {153974},
MRREVIEWER = {S.\ Zaidman},
}

@article {ZhangWangWang2026,
    AUTHOR = {Zhang, Shaoheng and Wang, Kui and Wang, Yun},
     TITLE = {Point singularities of solutions to the stationary
              incompressible {MHD} equations},
   JOURNAL = {J. Differential Equations},
  FJOURNAL = {Journal of Differential Equations},
    VOLUME = {454},
      YEAR = {2026},
     PAGES = {113935},
      ISSN = {0022-0396,1090-2732},
   MRCLASS = {35Q35 (35A21 35B65 76W05)},
  MRNUMBER = {4990938},
       DOI = {10.1016/j.jde.2025.113935},
       URL = {https://doi.org/10.1016/j.jde.2025.113935},
}

@article {HishidaSchonbek16,
    AUTHOR = {Hishida, Toshiaki and Schonbek, Maria E.},
     TITLE = {Stability of time-dependent {N}avier-{S}tokes flow and
              algebraic energy decay},
   JOURNAL = {Indiana Univ. Math. J.},
  FJOURNAL = {Indiana University Mathematics Journal},
    VOLUME = {65},
      YEAR = {2016},
    NUMBER = {4},
     PAGES = {1307--1346},
      ISSN = {0022-2518,1943-5258},
   MRCLASS = {35Q30 (35B35 35B40 76D05)},
  MRNUMBER = {3549203},
MRREVIEWER = {Pavel\ I.\ Naumkin},
       DOI = {10.1512/iumj.2016.65.5843},
       URL = {https://doi.org/10.1512/iumj.2016.65.5843},
}

@book {BerghLofstrom76,
    AUTHOR = {Bergh, J\"oran and L\"ofstr\"om, J\"orgen},
     TITLE = {Interpolation spaces. {A}n introduction},
    SERIES = {Grundlehren der Mathematischen Wissenschaften},
    VOLUME = {No. 223},
 PUBLISHER = {Springer-Verlag, Berlin-New York},
      YEAR = {1976},
     PAGES = {x+207},
   MRCLASS = {46M35},
  MRNUMBER = {482275},
}

@article {Leray1934,
    AUTHOR = {Leray, Jean},
     TITLE = {Sur le mouvement d'un liquide visqueux emplissant l'espace},
   JOURNAL = {Acta Math.},
  FJOURNAL = {Acta Mathematica},
    VOLUME = {63},
      YEAR = {1934},
    NUMBER = {1},
     PAGES = {193--248},
      ISSN = {0001-5962,1871-2509},
   MRCLASS = {99-04},
  MRNUMBER = {1555394},
       DOI = {10.1007/BF02547354},
       URL = {https://doi.org/10.1007/BF02547354},
}

@article {BorchersMiyakawa92,
    AUTHOR = {Borchers, Wolfgang and Miyakawa, Tetsuro},
     TITLE = {{$L^2$}-decay for {N}avier-{S}tokes flows in unbounded
              domains, with application to exterior stationary flows},
   JOURNAL = {Arch. Rational Mech. Anal.},
  FJOURNAL = {Archive for Rational Mechanics and Analysis},
    VOLUME = {118},
      YEAR = {1992},
    NUMBER = {3},
     PAGES = {273--295},
      ISSN = {0003-9527},
   MRCLASS = {35Q30 (76D05)},
  MRNUMBER = {1158939},
       DOI = {10.1007/BF00387899},
       URL = {https://doi.org/10.1007/BF00387899},
}

@article{zbMATH08005764,
 author = {Sammartino, M. and Schonbek, M. E. and Sciacca, V.},
 title = {Dissipative 2D {MHD} equations with {{\(L^1\)}} vorticity and magnetic current},
 fjournal = {Journal of Hyperbolic Differential Equations},
 journal = {J. Hyperbolic Differ. Equ.},
 issn = {0219-8916},
 volume = {21},
 number = {3},
 pages = {791--810},
 year = {2024},
 doi = {10.1142/S0219891624400083},
 url = {hdl.handle.net/10447/672323},
 zbMATH = {8005764}
}

@article {MR3780143,
    AUTHOR = {Cai, Yuan and Lei, Zhen},
     TITLE = {Global well-posedness of the incompressible
              magnetohydrodynamics},
   JOURNAL = {Arch. Ration. Mech. Anal.},
  FJOURNAL = {Archive for Rational Mechanics and Analysis},
    VOLUME = {228},
      YEAR = {2018},
    NUMBER = {3},
     PAGES = {969--993},
      ISSN = {0003-9527,1432-0673},
   MRCLASS = {76W05 (35B30 35Q35)},
  MRNUMBER = {3780143},
MRREVIEWER = {V.\ D.\ Sharma},
       DOI = {10.1007/s00205-017-1210-4},
       URL = {https://doi.org/10.1007/s00205-017-1210-4},
}

@article {MR1616418,
    AUTHOR = {Wu, Jiahong},
     TITLE = {Viscous and inviscid magnetohydrodynamics equations},
   JOURNAL = {J. Anal. Math.},
  FJOURNAL = {Journal d'Analyse Math\'{e}matique},
    VOLUME = {73},
      YEAR = {1997},
     PAGES = {251--265},
      ISSN = {0021-7670,1565-8538},
   MRCLASS = {35Q35 (76W05)},
  MRNUMBER = {1616418},
       DOI = {10.1007/BF02788146},
       URL = {https://doi.org/10.1007/BF02788146},
}

@article {MR346289,
    AUTHOR = {Duvaut, G. and Lions, J.-L.},
     TITLE = {In\'{e}quations en thermo\'{e}lasticit\'{e} et
              magn\'{e}tohydrodynamique},
   JOURNAL = {Arch. Rational Mech. Anal.},
  FJOURNAL = {Archive for Rational Mechanics and Analysis},
    VOLUME = {46},
      YEAR = {1972},
     PAGES = {241--279},
      ISSN = {0003-9527},
   MRCLASS = {35B45 (73.35 76.35)},
  MRNUMBER = {346289},
MRREVIEWER = {P.\ Germain},
       DOI = {10.1007/BF00250512},
       URL = {https://doi.org/10.1007/BF00250512},
}

@article {MR716200,
    AUTHOR = {Sermange, Michel and Temam, Roger},
     TITLE = {Some mathematical questions related to the {MHD} equations},
   JOURNAL = {Comm. Pure Appl. Math.},
  FJOURNAL = {Communications on Pure and Applied Mathematics},
    VOLUME = {36},
      YEAR = {1983},
    NUMBER = {5},
     PAGES = {635--664},
      ISSN = {0010-3640,1097-0312},
   MRCLASS = {76W05 (35Q10)},
  MRNUMBER = {716200},
MRREVIEWER = {P.\ L.\ Sulem},
       DOI = {10.1002/cpa.3160360506},
       URL = {https://doi.org/10.1002/cpa.3160360506},
}

@article {MR3766969,
    AUTHOR = {Tan, Zhong and Wang, Yanjin},
     TITLE = {Global well-posedness of an initial-boundary value problem for
              viscous non-resistive {MHD} systems},
   JOURNAL = {SIAM J. Math. Anal.},
  FJOURNAL = {SIAM Journal on Mathematical Analysis},
    VOLUME = {50},
      YEAR = {2018},
    NUMBER = {1},
     PAGES = {1432--1470},
      ISSN = {0036-1410,1095-7154},
   MRCLASS = {76W05 (35B30 35Q35 76N10)},
  MRNUMBER = {3766969},
MRREVIEWER = {Iuliana\ Oprea},
       DOI = {10.1137/16M1088156},
       URL = {https://doi.org/10.1137/16M1088156},
}

@book {Galdi11,
    AUTHOR = {Galdi, G. P.},
     TITLE = {An introduction to the mathematical theory of the
              {N}avier-{S}tokes equations},
    SERIES = {Springer Monographs in Mathematics},
   EDITION = {Second},
 PUBLISHER = {Springer, New York},
      YEAR = {2011},
     PAGES = {xiv+1018},
      ISBN = {978-0-387-09619-3},
   MRCLASS = {35Q30 (35-02 76D03 76D05 76D07)},
  MRNUMBER = {2808162},
       DOI = {10.1007/978-0-387-09620-9},
       URL = {https://doi.org/10.1007/978-0-387-09620-9},
}

@book {Temam1979,
    AUTHOR = {Temam, Roger},
     TITLE = {Navier-{S}tokes equations},
    SERIES = {Studies in Mathematics and its Applications},
    VOLUME = {2},
   EDITION = {Revised},
      NOTE = {Theory and numerical analysis,
              With an appendix by F. Thomasset},
 PUBLISHER = {North-Holland Publishing Co., Amsterdam-New York},
      YEAR = {1979},
     PAGES = {x+519},
      ISBN = {0-444-85307-3},
   MRCLASS = {35Q10 (49D99 65P05 76D05)},
  MRNUMBER = {603444},
}

@article {Kato1984,
    AUTHOR = {Kato, Tosio},
     TITLE = {Strong {$L\sp{p}$}-solutions of the {N}avier-{S}tokes equation
              in {${\bf R}\sp{m}$}, with applications to weak solutions},
   JOURNAL = {Math. Z.},
  FJOURNAL = {Mathematische Zeitschrift},
    VOLUME = {187},
      YEAR = {1984},
    NUMBER = {4},
     PAGES = {471--480},
      ISSN = {0025-5874,1432-1823},
   MRCLASS = {35Q10 (76D05)},
  MRNUMBER = {760047},
MRREVIEWER = {Yoshikazu\ Giga},
       DOI = {10.1007/BF01174182},
       URL = {https://doi.org/10.1007/BF01174182},
}

@article {Masuda1984,
    AUTHOR = {Masuda, Ky\=uya},
     TITLE = {Weak solutions of {N}avier-{S}tokes equations},
   JOURNAL = {Tohoku Math. J. (2)},
  FJOURNAL = {The Tohoku Mathematical Journal. Second Series},
    VOLUME = {36},
      YEAR = {1984},
    NUMBER = {4},
     PAGES = {623--646},
      ISSN = {0040-8735,2186-585X},
   MRCLASS = {35Q10 (35D99)},
  MRNUMBER = {767409},
MRREVIEWER = {Michael\ Wiegner},
       DOI = {10.2748/tmj/1178228767},
       URL = {https://doi.org/10.2748/tmj/1178228767},
}

@article {Schonbek1985,
    AUTHOR = {Schonbek, Maria Elena},
     TITLE = {{$L^2$} decay for weak solutions of the {N}avier-{S}tokes
              equations},
   JOURNAL = {Arch. Rational Mech. Anal.},
  FJOURNAL = {Archive for Rational Mechanics and Analysis},
    VOLUME = {88},
      YEAR = {1985},
    NUMBER = {3},
     PAGES = {209--222},
      ISSN = {0003-9527},
   MRCLASS = {35Q10 (76D05)},
  MRNUMBER = {775190},
MRREVIEWER = {Yoshikazu\ Giga},
       DOI = {10.1007/BF00752111},
       URL = {https://doi.org/10.1007/BF00752111},
}

@article {Schonbek1986,
    AUTHOR = {Schonbek, Maria E.},
     TITLE = {Large time behaviour of solutions to the {N}avier-{S}tokes
              equations},
   JOURNAL = {Comm. Partial Differential Equations},
  FJOURNAL = {Communications in Partial Differential Equations},
    VOLUME = {11},
      YEAR = {1986},
    NUMBER = {7},
     PAGES = {733--763},
      ISSN = {0360-5302,1532-4133},
   MRCLASS = {35Q10 (76D05)},
  MRNUMBER = {837929},
MRREVIEWER = {Charles\ J.\ Amick},
       DOI = {10.1080/03605308608820443},
       URL = {https://doi.org/10.1080/03605308608820443},
}

@article {SSS1996,
    AUTHOR = {Schonbek, M. E. and Schonbek, T. P. and S\"uli, Endre},
     TITLE = {Large-time behaviour of solutions to the magnetohydrodynamics
              equations},
   JOURNAL = {Math. Ann.},
  FJOURNAL = {Mathematische Annalen},
    VOLUME = {304},
      YEAR = {1996},
    NUMBER = {4},
     PAGES = {717--756},
      ISSN = {0025-5831,1432-1807},
   MRCLASS = {35Q35 (76W05)},
  MRNUMBER = {1380452},
MRREVIEWER = {Jos\'e\ Luiz\ Boldrini},
       DOI = {10.1007/BF01446316},
       URL = {https://doi.org/10.1007/BF01446316},
}

@article {AgapitoSchonbek2007,
    AUTHOR = {Agapito, Rub\'en and Schonbek, Maria},
     TITLE = {Non-uniform decay of {MHD} equations with and without magnetic
              diffusion},
   JOURNAL = {Comm. Partial Differential Equations},
  FJOURNAL = {Communications in Partial Differential Equations},
    VOLUME = {32},
      YEAR = {2007},
    NUMBER = {10-12},
     PAGES = {1791--1812},
      ISSN = {0360-5302,1532-4133},
   MRCLASS = {35Q35 (35B40 76D05)},
  MRNUMBER = {2372488},
MRREVIEWER = {Paolo\ Secchi},
       DOI = {10.1080/03605300701318658},
       URL = {https://doi.org/10.1080/03605300701318658},
}

@article {MiuraTsai2012,
    AUTHOR = {Miura, Hideyuki and Tsai, Tai-Peng},
     TITLE = {Point singularities of 3{D} stationary {N}avier-{S}tokes
              flows},
   JOURNAL = {J. Math. Fluid Mech.},
  FJOURNAL = {Journal of Mathematical Fluid Mechanics},
    VOLUME = {14},
      YEAR = {2012},
    NUMBER = {1},
     PAGES = {33--41},
      ISSN = {1422-6928,1422-6952},
   MRCLASS = {35Q30 (35A20 35D30 76D05)},
  MRNUMBER = {2891188},
MRREVIEWER = {Sergey\ Nikolaevich\ Alekseenko},
       DOI = {10.1007/s00021-010-0046-6},
       URL = {https://doi.org/10.1007/s00021-010-0046-6},
}

@article {KorolevSverak2011,
    AUTHOR = {Korolev, A. and \v Sver\'ak, V.},
     TITLE = {On the large-distance asymptotics of steady state solutions of
              the {N}avier-{S}tokes equations in 3{D} exterior domains},
   JOURNAL = {Ann. Inst. H. Poincar\'e{} C Anal. Non Lin\'eaire},
  FJOURNAL = {Annales de l'Institut Henri Poincar\'e{} C. Analyse Non
              Lin\'eaire},
    VOLUME = {28},
      YEAR = {2011},
    NUMBER = {2},
     PAGES = {303--313},
      ISSN = {0294-1449,1873-1430},
   MRCLASS = {35Q30 (35B40 76D05)},
  MRNUMBER = {2784073},
MRREVIEWER = {Lorenzo\ Brandolese},
       DOI = {10.1016/j.anihpc.2011.01.003},
       URL = {https://doi.org/10.1016/j.anihpc.2011.01.003},
}

@article {MR952901,
    AUTHOR = {Schmidt, Paul G\"{u}nter},
     TITLE = {On a magnetohydrodynamic problem of {E}uler type},
   JOURNAL = {J. Differential Equations},
  FJOURNAL = {Journal of Differential Equations},
    VOLUME = {74},
      YEAR = {1988},
    NUMBER = {2},
     PAGES = {318--335},
      ISSN = {0022-0396,1090-2732},
   MRCLASS = {76W05 (35Q10)},
  MRNUMBER = {952901},
MRREVIEWER = {Denis\ Serre},
       DOI = {10.1016/0022-0396(88)90008-3},
       URL = {https://doi.org/10.1016/0022-0396(88)90008-3},
}

@article {MR1257135,
    AUTHOR = {Secchi, Paolo},
     TITLE = {On the equations of ideal incompressible magnetohydrodynamics},
   JOURNAL = {Rend. Sem. Mat. Univ. Padova},
  FJOURNAL = {Rendiconti del Seminario Matematico della Universit\`a di
              Padova. The Mathematical Journal of the University of Padova},
    VOLUME = {90},
      YEAR = {1993},
     PAGES = {103--119},
      ISSN = {0041-8994},
   MRCLASS = {76W05 (35Q30 76C99)},
  MRNUMBER = {1257135},
MRREVIEWER = {Alberto\ Valli},
       URL = {http://www.numdam.org/item?id=RSMUP_1993__90__103_0},
}

@article {MR3415680,
    AUTHOR = {Chemin, Jean-Yves and McCormick, David S. and Robinson, James
              C. and Rodrigo, Jose L.},
     TITLE = {Local existence for the non-resistive {MHD} equations in
              {B}esov spaces},
   JOURNAL = {Adv. Math.},
  FJOURNAL = {Advances in Mathematics},
    VOLUME = {286},
      YEAR = {2016},
     PAGES = {1--31},
      ISSN = {0001-8708,1090-2082},
   MRCLASS = {35Q35 (35A01 35A02 35D30 76W05)},
  MRNUMBER = {3415680},
MRREVIEWER = {Reinhard\ Redlinger},
       DOI = {10.1016/j.aim.2015.09.004},
       URL = {https://doi.org/10.1016/j.aim.2015.09.004},
}

@article {MR3590662,
    AUTHOR = {Fefferman, Charles L. and McCormick, David S. and Robinson,
              James C. and Rodrigo, Jose L.},
     TITLE = {Local existence for the non-resistive {MHD} equations in
              nearly optimal {S}obolev spaces},
   JOURNAL = {Arch. Ration. Mech. Anal.},
  FJOURNAL = {Archive for Rational Mechanics and Analysis},
    VOLUME = {223},
      YEAR = {2017},
    NUMBER = {2},
     PAGES = {677--691},
      ISSN = {0003-9527,1432-0673},
   MRCLASS = {35Q35 (76W05)},
  MRNUMBER = {3590662},
MRREVIEWER = {Alberto\ Valli},
       DOI = {10.1007/s00205-016-1042-7},
       URL = {https://doi.org/10.1007/s00205-016-1042-7},
}

@article {MR2772455,
    AUTHOR = {Zhou, Yong and Fan, Jishan},
     TITLE = {A regularity criterion for the 2{D} {MHD} system with zero
              magnetic diffusivity},
   JOURNAL = {J. Math. Anal. Appl.},
  FJOURNAL = {Journal of Mathematical Analysis and Applications},
    VOLUME = {378},
      YEAR = {2011},
    NUMBER = {1},
     PAGES = {169--172},
      ISSN = {0022-247X,1096-0813},
   MRCLASS = {35Q35 (35B60 35B65 42B35 76D03 76W05)},
  MRNUMBER = {2772455},
MRREVIEWER = {Michael\ Mudi\ Tom},
       DOI = {10.1016/j.jmaa.2011.01.014},
       URL = {https://doi.org/10.1016/j.jmaa.2011.01.014},
}

@article {MR3217057,
    AUTHOR = {Fefferman, Charles L. and McCormick, David S. and Robinson,
              James C. and Rodrigo, Jose L.},
     TITLE = {Higher order commutator estimates and local existence for the
              non-resistive {MHD} equations and related models},
   JOURNAL = {J. Funct. Anal.},
  FJOURNAL = {Journal of Functional Analysis},
    VOLUME = {267},
      YEAR = {2014},
    NUMBER = {4},
     PAGES = {1035--1056},
      ISSN = {0022-1236,1096-0783},
   MRCLASS = {35Q35 (76W05)},
  MRNUMBER = {3217057},
MRREVIEWER = {Paolo\ Secchi},
       DOI = {10.1016/j.jfa.2014.03.021},
       URL = {https://doi.org/10.1016/j.jfa.2014.03.021},
}

@article {MR2265204,
    AUTHOR = {Jiu, Quansen and Niu, Dongjuan},
     TITLE = {Mathematical results related to a two-dimensional
              magneto-hydrodynamic equations},
   JOURNAL = {Acta Math. Sci. Ser. B (Engl. Ed.)},
  FJOURNAL = {Acta Mathematica Scientia. Series B. English Edition},
    VOLUME = {26},
      YEAR = {2006},
    NUMBER = {4},
     PAGES = {744--756},
      ISSN = {0252-9602,1572-9087},
   MRCLASS = {35Q35 (35B40 35Q60 76D03 76W05)},
  MRNUMBER = {2265204},
       DOI = {10.1016/S0252-9602(06)60101-X},
       URL = {https://doi.org/10.1016/S0252-9602(06)60101-X},
}

@article {MR4496608,
    AUTHOR = {Ye, Wei Kui and Yin, Zhao Yang},
     TITLE = {Global well-posedness for the non-viscous {MHD} equations with
              magnetic diffusion in critical {B}esov spaces},
   JOURNAL = {Acta Math. Sin. (Engl. Ser.)},
  FJOURNAL = {Acta Mathematica Sinica (English Series)},
    VOLUME = {38},
      YEAR = {2022},
    NUMBER = {9},
     PAGES = {1493--1511},
      ISSN = {1439-8516,1439-7617},
   MRCLASS = {35Q35 (76W05)},
  MRNUMBER = {4496608},
       DOI = {10.1007/s10114-022-1400-3},
       URL = {https://doi.org/10.1007/s10114-022-1400-3},
}

@article {MR2507450,
    AUTHOR = {Fan, Jishan and Ozawa, Tohru},
     TITLE = {Regularity criteria for the magnetohydrodynamic equations with
              partial viscous terms and the {L}eray-{$\alpha$}-{MHD} model},
   JOURNAL = {Kinet. Relat. Models},
  FJOURNAL = {Kinetic and Related Models},
    VOLUME = {2},
      YEAR = {2009},
    NUMBER = {2},
     PAGES = {293--305},
      ISSN = {1937-5093,1937-5077},
   MRCLASS = {76D03 (35B65)},
  MRNUMBER = {2507450},
       DOI = {10.3934/krm.2009.2.293},
       URL = {https://doi.org/10.3934/krm.2009.2.293},
}

@article {MR1007099,
    AUTHOR = {Kozono, Hideo},
     TITLE = {Weak and classical solutions of the two-dimensional
              magnetohydrodynamic equations},
   JOURNAL = {Tohoku Math. J. (2)},
  FJOURNAL = {The Tohoku Mathematical Journal. Second Series},
    VOLUME = {41},
      YEAR = {1989},
    NUMBER = {3},
     PAGES = {471--488},
      ISSN = {0040-8735,2186-585X},
   MRCLASS = {35Q10 (76W05)},
  MRNUMBER = {1007099},
MRREVIEWER = {Sadakazu\ Aizawa},
       DOI = {10.2748/tmj/1178227774},
       URL = {https://doi.org/10.2748/tmj/1178227774},
}

@article{zbMATH07327027,
 author = {Beekie, Rajendra and Buckmaster, Tristan and Vicol, Vlad},
 title = {Weak solutions of ideal {MHD} which do not conserve magnetic helicity},
 fjournal = {Annals of PDE},
 journal = {Ann. PDE},
 issn = {2524-5317},
 volume = {6},
 number = {1},
 pages = {40},
 note = {Id/No 1},
 year = {2020},
 doi = {10.1007/s40818-020-0076-1},
 zbMATH = {7327027},
 Zbl = {1462.35282}
}

@article{zbMATH08070132,
 author = {Bradshaw, Zachary and Wang, Weinan},
 title = {Asymptotic stability for the 3D {Navier}-{Stokes} equations in {{\(L^3\)}} and nearby spaces},
 fjournal = {Proceedings of the American Mathematical Society},
 journal = {Proc. Am. Math. Soc.},
 issn = {0002-9939},
 volume = {153},
 number = {9},
 pages = {3867--3881},
 year = {2025},
 doi = {10.1090/proc/17301},
 zbMATH = {8070132},
 Zbl = {1569.35199}
}

@article{LiYan2021,
 author = {Li, YanYan and Yan, Xukai},
 title = {Asymptotic stability of homogeneous solutions of incompressible stationary {Navier}-{Stokes} equations},
 fjournal = {Journal of Differential Equations},
 journal = {J. Differ. Equations},
 issn = {0022-0396},
 volume = {297},
 pages = {226--245},
 year = {2021},
 doi = {10.1016/j.jde.2021.06.033},
 zbMATH = {7373375},
 Zbl = {1473.35407}
}

@article{zbMATH07471753,
 author = {Cannone, Marco and Karch, Grzegorz and Pilarczyk, Dominika and Wu, Gang},
 title = {Stability of singular solutions to the {Navier}-{Stokes} system},
 fjournal = {Journal of Differential Equations},
 journal = {J. Differ. Equations},
 issn = {0022-0396},
 volume = {314},
 pages = {316--339},
 year = {2022},
 doi = {10.1016/j.jde.2022.01.010},
 zbMATH = {7471753},
 Zbl = {1487.35283}
}

@article{LZZ23,
 author = {Li, Yanyan and Zhang, Jingjing and Zhang, Ting},
 title = {Asymptotic stability of {Landau} solutions to {Navier}-{Stokes} system under {{\(L^p\)}}-perturbations},
 fjournal = {Journal of Mathematical Fluid Mechanics},
 journal = {J. Math. Fluid Mech.},
 issn = {1422-6928},
 volume = {25},
 number = {1},
 pages = {30},
 note = {Id/No 5},
 year = {2023},
 doi = {10.1007/s00021-022-00751-x},
 zbMATH = {7638115},
 Zbl = {1532.76036}
}

@article {MR1777114,
    AUTHOR = {Yamazaki, Masao},
     TITLE = {The {N}avier-{S}tokes equations in the weak-{$L^n$} space with
              time-dependent external force},
   JOURNAL = {Math. Ann.},
  FJOURNAL = {Mathematische Annalen},
    VOLUME = {317},
      YEAR = {2000},
    NUMBER = {4},
     PAGES = {635--675},
      ISSN = {0025-5831,1432-1807},
   MRCLASS = {35Q30 (35A07 76D03 76D05)},
  MRNUMBER = {1777114},
MRREVIEWER = {J\"{u}rgen\ Socolowsky},
       DOI = {10.1007/PL00004418},
       URL = {https://doi.org/10.1007/PL00004418},
}

@article{zbMATH06383463,
 author = {Hmidi, Taoufik},
 title = {On the {Yudovich} solutions for the ideal {MHD} equations},
 fjournal = {Nonlinearity},
 journal = {Nonlinearity},
 issn = {0951-7715},
 volume = {27},
 number = {12},
 pages = {3117--3158},
 year = {2014},
 doi = {10.1088/0951-7715/27/12/3117},
 zbMATH = {6383463},
 Zbl = {1374.35310}
}

@article{zbMATH05675031,
 author = {Chen, Qionglei and Miao, Changxing and Zhang, Zhifei},
 title = {On the well-posedness of the ideal {MHD} equations in the {Triebel}-{Lizorkin} spaces},
 fjournal = {Archive for Rational Mechanics and Analysis},
 journal = {Arch. Ration. Mech. Anal.},
 issn = {0003-9527},
 volume = {195},
 number = {2},
 pages = {561--578},
 year = {2010},
 doi = {10.1007/s00205-008-0213-6},
 zbMATH = {5675031},
 Zbl = {1184.35258}
}

@article{zbMATH07635066,
 author = {Cobb, Dimitri and Fanelli, Francesco},
 title = {Els{\"a}sser formulation of the ideal {MHD} and improved lifespan in two space dimensions},
 fjournal = {Journal de Math{\'e}matiques Pures et Appliqu{\'e}es. Neuvi{\`e}me S{\'e}rie},
 journal = {J. Math. Pures Appl. (9)},
 issn = {0021-7824},
 volume = {169},
 pages = {189--236},
 year = {2023},
 doi = {10.1016/j.matpur.2022.11.012},
 zbMATH = {7635066},
 Zbl = {1504.35304}
}

@article{zbMATH08053923,
 author = {Liu, Sicheng and Xin, Zhouping},
 title = {Local well-posedness of the incompressible current-vortex sheet problems},
 fjournal = {Advances in Mathematics},
 journal = {Adv. Math.},
 issn = {0001-8708},
 volume = {475},
 pages = {90},
 note = {Id/No 110339},
 year = {2025},
 doi = {10.1016/j.aim.2025.110339},
 zbMATH = {8053923},
 Zbl = {1569.35217}
}

@article {MR894160,
    AUTHOR = {Kozono, Hideo},
     TITLE = {On the energy decay of a weak solution of the {MHD} equations
              in a three-dimensional exterior domain},
   JOURNAL = {Hokkaido Math. J.},
  FJOURNAL = {Hokkaido Mathematical Journal},
    VOLUME = {16},
      YEAR = {1987},
    NUMBER = {2},
     PAGES = {151--166},
      ISSN = {0385-4035},
   MRCLASS = {35Q99 (76W05)},
  MRNUMBER = {894160},
MRREVIEWER = {P.\ L.\ Sulem},
       DOI = {10.14492/hokmj/1381517923},
       URL = {https://doi.org/10.14492/hokmj/1381517923},
}

@article {KarchPilarczykSchonbek17,
    AUTHOR = {Karch, Grzegorz and Pilarczyk, Dominika and Schonbek, Maria
              E.},
     TITLE = {{$L^2$}-asymptotic stability of singular solutions to the
              {N}avier-{S}tokes system of equations in {$\mathbf{R}^3$}},
   JOURNAL = {J. Math. Pures Appl. (9)},
  FJOURNAL = {Journal de Math\'ematiques Pures et Appliqu\'ees. Neuvi\`eme
              S\'erie},
    VOLUME = {108},
      YEAR = {2017},
    NUMBER = {1},
     PAGES = {14--40},
      ISSN = {0021-7824,1776-3371},
   MRCLASS = {35Q30 (35B35 35B40 76D05)},
  MRNUMBER = {3660767},
MRREVIEWER = {Pavel\ I.\ Naumkin},
       DOI = {10.1016/j.matpur.2016.10.008},
       URL = {https://doi.org/10.1016/j.matpur.2016.10.008},
}

@article {LLY18I,
    AUTHOR = {Li, Li and Li, YanYan and Yan, Xukai},
     TITLE = {Homogeneous solutions of stationary {N}avier-{S}tokes
              equations with isolated singularities on the unit sphere. {I}.
              {O}ne singularity},
   JOURNAL = {Arch. Ration. Mech. Anal.},
  FJOURNAL = {Archive for Rational Mechanics and Analysis},
    VOLUME = {227},
      YEAR = {2018},
    NUMBER = {3},
     PAGES = {1091--1163},
      ISSN = {0003-9527,1432-0673},
   MRCLASS = {35Q30 (76D03 76D05)},
  MRNUMBER = {3744383},
MRREVIEWER = {J\"urgen\ Socolowsky},
       DOI = {10.1007/s00205-017-1181-5},
       URL = {https://doi.org/10.1007/s00205-017-1181-5},
}

@article {LLY18II,
    AUTHOR = {Li, Li and Li, YanYan and Yan, Xukai},
     TITLE = {Homogeneous solutions of stationary {N}avier-{S}tokes
              equations with isolated singularities on the unit sphere.
              {II}. {C}lassification of axisymmetric no-swirl solutions},
   JOURNAL = {J. Differential Equations},
  FJOURNAL = {Journal of Differential Equations},
    VOLUME = {264},
      YEAR = {2018},
    NUMBER = {10},
     PAGES = {6082--6108},
      ISSN = {0022-0396,1090-2732},
   MRCLASS = {35Q30 (76D03 76D05)},
  MRNUMBER = {3770045},
MRREVIEWER = {J\"urgen\ Socolowsky},
       DOI = {10.1016/j.jde.2018.01.028},
       URL = {https://doi.org/10.1016/j.jde.2018.01.028},
}

@article {LLY19III,
    AUTHOR = {Li, Li and Li, Yanyan and Yan, Xukai},
     TITLE = {Homogeneous solutions of stationary {N}avier-{S}tokes
              equations with isolated singularities on the unit sphere.
              {III}. {T}wo singularities},
   JOURNAL = {Discrete Contin. Dyn. Syst.},
  FJOURNAL = {Discrete and Continuous Dynamical Systems},
    VOLUME = {39},
      YEAR = {2019},
    NUMBER = {12},
     PAGES = {7163--7211},
      ISSN = {1078-0947,1553-5231},
   MRCLASS = {35Q30 (76D03 76D05)},
  MRNUMBER = {4026186},
MRREVIEWER = {J\"urgen\ Socolowsky},
       DOI = {10.3934/dcds.2019300},
       URL = {https://doi.org/10.3934/dcds.2019300},
}

@incollection {AGG2006,
    AUTHOR = {Arendt, Wolfgang and Goldstein, Gis\`ele Ruiz and Goldstein,
              Jerome A.},
     TITLE = {Outgrowths of {H}ardy's inequality},
 BOOKTITLE = {Recent advances in differential equations and mathematical
              physics},
    SERIES = {Contemp. Math.},
    VOLUME = {412},
     PAGES = {51--68},
 PUBLISHER = {Amer. Math. Soc., Providence, RI},
      YEAR = {2006},
      ISBN = {978-0-8218-3840-2; 0-8218-3840-7},
   MRCLASS = {47D06 (26D15 35J15 47F05 81Q10)},
  MRNUMBER = {2259099},
MRREVIEWER = {Michael\ A.\ Perelmuter},
       DOI = {10.1090/conm/412/07766},
       URL = {https://doi.org/10.1090/conm/412/07766},
}

@book {Pazy1983,
    AUTHOR = {Pazy, A.},
     TITLE = {Semigroups of linear operators and applications to partial
              differential equations},
    SERIES = {Applied Mathematical Sciences},
    VOLUME = {44},
 PUBLISHER = {Springer-Verlag, New York},
      YEAR = {1983},
     PAGES = {viii+279},
      ISBN = {0-387-90845-5},
   MRCLASS = {47D05 (34Gxx 35Fxx 35Gxx 47H20)},
  MRNUMBER = {710486},
MRREVIEWER = {H.\ O.\ Fattorini},
       DOI = {10.1007/978-1-4612-5561-1},
       URL = {https://doi.org/10.1007/978-1-4612-5561-1},
}

@book {Haase06,
    AUTHOR = {Haase, Markus},
     TITLE = {The functional calculus for sectorial operators},
    SERIES = {Operator Theory: Advances and Applications},
    VOLUME = {169},
 PUBLISHER = {Birkh\"auser Verlag, Basel},
      YEAR = {2006},
     PAGES = {xiv+392},
      ISBN = {978-3-7643-7697-0; 3-7643-7697-X},
   MRCLASS = {47A60 (30E05 44A15 46B70 47A55 47D03 47E05 47F05)},
  MRNUMBER = {2244037},
MRREVIEWER = {Christian\ Le Merdy},
       DOI = {10.1007/3-7643-7698-8},
       URL = {https://doi.org/10.1007/3-7643-7698-8},
}

\vfill 

\end{document}